\numberwithin{equation}{section}
\newtheorem{theorem}{Theorem}[section]
\newtheorem{proposition}[theorem]{Proposition}
\newtheorem{lemma}[theorem]{Lemma}
\theoremstyle{definition}
\newtheorem{definition}[theorem]{Definition}
\newtheoremstyle{customNumber}
     {}          % Space above (empty = default)
     {}          % Space below
     {\itshape}  % Body font
     {}          % Indent amount (empty = no indent)
     {\bfseries} % Thm head font
     {.}         % Punctuation after thm head
     { }         % Space after thm head (\newline = linebreak)
     {\thmname{#1}\thmnumber{ #2}\thmnote{ #3}}
\theoremstyle{customNumber}
\renewcommand{\rho}{\varrho}
\newcommand{\norm}[1]{\lVert#1\rVert}
\newcommand{\abs}[1]{\lvert#1\rvert}
\newcommand{\on}{\:\mbox{\rule{0.1ex}{1.2ex}\rule{1.1ex}{0.1ex}}\:}
\newcommand{\bb}[1]{\llbracket #1\rrbracket}
\DeclareMathOperator{\N}{\mathbb{N}}
\DeclareMathOperator{\R}{\mathbb{R}}
\DeclareMathOperator{\Lip}{Lip}
\DeclareMathOperator{\sgn}{sgn}
\DeclareMathOperator{\spt}{spt}
\DeclareMathOperator{\mass}{\mathbf{M}}
\DeclareMathOperator{\bL}{\mathbf{L}}
\DeclareMathOperator{\id}{id}
\DeclareMathOperator{\vol}{Vol}
\DeclareMathOperator{\Haus}{\mathscr{H}}
\DeclareMathOperator{\Leb}{\mathscr{L}}
\DeclareMathOperator{\Jac}{Jac}
\DeclareMathOperator{\loc}{loc}
\DeclareMathOperator{\md}{md}
\DeclareMathOperator{\weakd}{wd}
\DeclareMathOperator{\J}{J}
\DeclareMathOperator{\ir}{ir}
\DeclareMathOperator{\crr}{cr}
\DeclareMathOperator{\htt}{ht}
\DeclareMathOperator{\bus}{b}
\newdimen\vintkern\vintkern11pt
\def\vint{-\kern-\vintkern\int}
\renewcommand{\epsilon}{\varepsilon}
\def\XXint#1#2#3{{\setbox0=\hbox{$#1{#2#3}{\int}$ }
\vcenter{\hbox{$#2#3$ }}\kern-.6\wd0}}
\patchcmd{\@setaddresses}{\indent}{\noindent}{}{}
\patchcmd{\@setaddresses}{\indent}{\noindent}{}{}
\patchcmd{\@setaddresses}{\indent}{\noindent}{}{}
\patchcmd{\@setaddresses}{\indent}{\noindent}{}{}
\keywords{Metric currents, Lipschitz norms, Finsler volumes, normed spaces, \(k\)-volume density, semi-ellipticity}
\subjclass[2020]{Primary 53C23; Secondary 49Q15 and 52A21}
\thanks{The author gratefully acknowledges financial support by the MPIM Bonn.}
\author{Giuliano Basso}
\address{
Max Planck Institute for Mathematics,
Vivatsgasse 7,
53111 Bonn,
Germany}
\email{basso@mpim-bonn.mpg.de}
\title{Finsler currents}
\begin{document}
\maketitle

\begin{abstract}
We propose a slight variant of Ambrosio and Kirchheim's definition of a metric current. We show that with this new definition it is possible to obtain certain volume functionals from Finsler geometry as mass measures of currents. As an application, we obtain a whole family of extendibly convex \(n\)-volume densities. This family includes the circumscribed Riemannian and the mass* volume densities.
\end{abstract}

\section{Introduction}
\subsection{Motivation}
In this article, we propose a slight variant of the definition of a metric current (see Definition~\ref{def:main-def} below). Building on ideas of de Giorgi, metric currents were introduced in \cite{ambrosio-2000} by Ambrosio and Kirchheim to generalize the classical theory of currents in Euclidean space to arbitrary complete metric spaces. The theory has been employed with success in various settings (see, for example, \cite{goldhirsch2021characterizations, huang-2022, sormani-2017, kleiner-2020, song2023entropy, sormani--2011, wenger-2005, wenger-2008}). Roughly speaking, a current \(T\) on a metric space \(X\) is a multilinear functional \(T\colon \Lip_b(X, \R)\times [\Lip(X, \R)]^{\,n}\to \R\) satisfying certain minimal assumptions such as continuity and a finite mass axiom. Thanks to the latter, to any current \(T\) there is associated a finite Borel measure \(\norm{T}\) on \(X\), called the mass measure of \(T\). As a particularly important example, we note that any closed oriented connected Riemannian \(n\)-manifold \(M\) induces an integral current \(\bb{M}\) in a natural way via integration of Lipschitz differential \(n\)-forms. The mass measure of \(\bb{M}\) is equal to the Riemannian volume measure of \(M\). 

Unlike for Riemannian manifolds, for Finsler manifolds there are several non-equivalent notions of volume. Most notably the volume functionals of Busemann and Holmes--Thompson, Gromov's mass* volume, and Ivanov's inscribed Riemannian volume. See \cite{thompson-2004, ivanov-2009} for basic properties of these volumes. As is well-known, the usual definition of a current recovers Gromov's mass* volume functional for Finsler manifolds. Indeed, if \(M\) is as above and \(\Phi \colon TM \to \R\) is a Finsler structure on \(M\) with associated Finsler metric \(d_\Phi\), then \(T=\bb{M}\) is also a current in \((M, d_\Phi)\) and \(\norm{T}
\) is equal to the mass* volume of \((M, \Phi)\).  

Using our modified definition of currents it is possible to recover other volume functionals from Finsler geometry. In particular, we show that it is possible to recover the circumscribed Riemannian volume functional as the mass measure of currents; see Proposition~\ref{prop:explicit-description-L-2}. This opens the door to study filling problems in Finsler geometry by using results and techniques from the theory of metric currents. 

\subsection{Definition of \(\bL\)-currents}
Let \(X\) be a complete metric space and \(\mathcal{D}^n(X)\) the set of all pairs \((f, \pi)\) with \(f\colon X \to \R\) a bounded Lipschitz function and \(\pi \colon X \to \R^n\) a Lipschitz map. If necessary, we will sometimes tacitly identify \(\mathcal{D}^n(X)\) with \(\Lip_{b}(X)\times \prod_{i=1}^n \Lip(X)\). Moreover, we write \(\pi^{(i)}\colon X \to \R\) for the \(i\)-th coordinate function of \(\pi\). We propose the following variant of Ambrosio and Kirchheim's definition \cite{ambrosio-2000} of a metric current.

\begin{definition}\label{def:main-def}
Let \(\mathbf{L}\colon \Lip(X, \R^n)\to \R\) be a semi-norm. A multilinear functional \(T\colon \mathcal{D}^n(X)\to \R\) is called \(\mathbf{L}\)-current if the following holds:
\vspace{0.25em}
\begin{enumerate}[label=\arabic*.)]
\item\label{it:one} If \((\pi_i)\) converges pointwise to \(\pi\in \Lip(X, \R^n)\) and \(\sup_i \mathbf{L}(\pi_i)\) is finite, then \(T(f, \pi_i)\to T(f, \pi)\) as \(i\to \infty\). \vspace{0.25em}
\item\label{it:two} \(T(f, \pi)=0\) if for some \(i\in \{1, \ldots, n\}\) the function \(\pi^{(i)}\) is constant on an open neighborhood of \(\spt(f)\). \vspace{0.25em}
\item\label{it:three} There exists a finite Borel measure \(\mu\) on \(X\) such that
\begin{equation}\label{eq:mass-estimate}
\abs{T(f, \pi)} \leq \mathbf{L}(\pi)^{\,n} \, \int_X \abs{f}\, d\mu
\end{equation}
for all \((f, \pi)\in \mathcal{D}^n(X)\).
\end{enumerate}
\end{definition}

The minimal measure \(\mu\) satisfying \eqref{eq:mass-estimate} is called mass measure of \(T\) and is denoted by \(\norm{T}_{\mathbf{L}}\). Any function \(\mathbf{L}\) as in Definition~\ref{def:main-def} is called \textit{Lipschitz norm}. We remark that analogous functions defined on \(C(X, \R)\) are of central importance in the theory of quantum metric spaces (see \cite{rieffel-1999, rieffel-2004} for more information).

Given a map \(f\colon X \to Y\) between metric spaces, we let \(\Lip(f)\) denote the best Lipschitz constant of \(f\). Recall that \(T\) is a \textit{current} if it satisfies Definition~\ref{def:main-def} with \(\sup_i \mathbf{L}(\pi_i)\) and \(\mathbf{L}(\pi)^n\) replaced by \(\sup_{i, j} \Lip(\pi_i^{(j)})\) and \(\prod_i \Lip(\pi^{(i)})\), respectively.  As a first example, we consider the Lipschitz norm \(\bL^\infty\), defined by
 \[
 \mathbf{L}^{\infty}(\pi)= \max_{i=1, \ldots, n} \Lip(\pi^{(i)}).
 \]
The following result shows that the usual definition of a metric current can be replaced by Definition~\ref{def:main-def} with \(\bL=\bL^\infty\), which has the advantage of being more symmetric.

\begin{proposition}\label{prop:equivalent-to-ak}
Let \(X\) be a complete metric space and \(T\colon \mathcal{D}^n(X)\to\R\) a multilinear functional. Then \(T\) is a current if and only if it is an \(\mathbf{L}^\infty\)-current.  Moreover, \(\norm{T}=\norm{T}_{\mathbf{L}^\infty}\).
\end{proposition}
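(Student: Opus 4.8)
The plan is to prove the proposition by comparing the three axioms defining a "current" (the Ambrosio--Kirchheim version) with the three axioms defining an "$\mathbf{L}^\infty$-current," and to observe that the differences are confined to the continuity axiom \ref{it:one} and the mass axiom \ref{it:three}, since the locality axiom \ref{it:two} is literally identical in both definitions. So I would reduce everything to two equivalences: first, that the AK-continuity hypothesis ($\sup_{i,j}\Lip(\pi_i^{(j)})<\infty$) is interchangeable with the $\mathbf{L}^\infty$-continuity hypothesis ($\sup_i \mathbf{L}^\infty(\pi_i)<\infty$); and second, that the AK-mass bound (with $\prod_i \Lip(\pi^{(i)})$) is interchangeable with the $\mathbf{L}^\infty$-mass bound (with $\mathbf{L}^\infty(\pi)^n$), in the sense that the minimal measures coincide.

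For the continuity equivalence, the key observation is the elementary two-sided estimate
\begin{equation}\label{eq:Linfty-comparison}
\mathbf{L}^\infty(\pi)=\max_{j}\Lip(\pi^{(j)})\le \sum_{j=1}^n \Lip(\pi^{(j)}) \le n\,\mathbf{L}^\infty(\pi),
\end{equation}
and, more to the point, that $\sup_i\max_j \Lip(\pi_i^{(j)})<\infty$ is \emph{the same condition} as $\sup_{i,j}\Lip(\pi_i^{(j)})<\infty$ (the inner max and the outer sup just commute). Hence the continuity hypotheses in the two definitions are verbatim the same boundedness condition dressed in different notation, and \ref{it:one} holds for one definition if and only if it holds for the other. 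I would spell this out and note that the conclusion $T(f,\pi_i)\to T(f,\pi)$ is identical in both.

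For the mass equivalence, which is where the genuine content lies, I would exploit multilinearity of $T$ in the components of $\pi$ together with the homogeneity built into $\mathbf{L}^\infty$. The forward direction is immediate from \eqref{eq:Linfty-comparison}: if the AK-bound holds with measure $\mu$, then $\abs{T(f,\pi)}\le \prod_i\Lip(\pi^{(i)})\int\abs{f}\,d\mu\le \mathbf{L}^\infty(\pi)^n\int\abs{f}\,d\mu$, so $T$ is an $\mathbf{L}^\infty$-current with the same $\mu$. The reverse direction is the main obstacle: from $\abs{T(f,\pi)}\le \mathbf{L}^\infty(\pi)^n\int\abs{f}\,d\mu$ I must recover the finer AK-bound $\abs{T(f,\pi)}\le\prod_i\Lip(\pi^{(i)})\int\abs{f}\,d\mu$, which does \emph{not} follow from \eqref{eq:Linfty-comparison} (that inequality goes the wrong way, costing a factor $n^n$). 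The trick is to use multilinearity and rescaling: for fixed $\pi$ with each $\Lip(\pi^{(i)})>0$, replace $\pi^{(i)}$ by $\lambda_i\pi^{(i)}$ with $\lambda_i=c/\Lip(\pi^{(i)})$ chosen so that all the rescaled components share a common Lipschitz constant $c$. Then $\mathbf{L}^\infty$ of the rescaled map is exactly $c$, multilinearity pulls out $\prod_i\lambda_i$, and the $\mathbf{L}^\infty$-bound applied to the rescaled map yields precisely $\prod_i\Lip(\pi^{(i)})\int\abs{f}\,d\mu$ after cancelling $\prod_i\lambda_i\cdot c^n$. The degenerate case where some $\Lip(\pi^{(i)})=0$ is handled separately via the locality axiom \ref{it:two}, since a component with zero Lipschitz constant is constant and forces $T(f,\pi)=0$ whenever $\spt(f)$ is covered appropriately—one reduces to this using approximation by cutoffs.

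Finally, to conclude $\norm{T}=\norm{T}_{\mathbf{L}^\infty}$, I would argue that the rescaling argument shows any measure $\mu$ validating the $\mathbf{L}^\infty$-bound also validates the AK-bound, and conversely, so the two families of admissible measures \eqref{eq:mass-estimate} coincide; since both mass measures are defined as the minimal element of the same family, they are equal. The only care needed is the treatment of the degenerate components and the verification that the rescaling respects the support condition so that the locality axiom can be invoked cleanly; I expect that to be the one spot requiring a short lemma rather than a one-line estimate.
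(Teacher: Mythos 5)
Your proposal is correct and follows essentially the same route as the paper: the paper's proof is exactly your rescaling argument, replacing each $\pi^{(i)}$ by $[\Lip(\pi^{(i)})]^{-1}\pi^{(i)}$ (your $c=1$), pulling out $\prod_i\Lip(\pi^{(i)})$ by multilinearity, and applying the $\mathbf{L}^\infty$-mass bound, with the constant components handled by locality. The only difference is presentational; the paper states only the nontrivial direction $\norm{T}\leq\norm{T}_{\mathbf{L}^\infty}$ explicitly and treats the converse inequality and the coincidence of the continuity hypotheses as immediate, as you also observe.
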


The proof is straightforward and can be found in Section~\ref{section:six}. A natural class of Lipschitz norms to consider are those induced by norms on \(\R^n\). Let \(\sigma\) be a norm on \(\R^n\). We set \(V=(\R^n, \sigma)\) and denote by \(I\colon \R^n \to V\) the identity map. Then
\begin{equation}\label{eq:main-example-Lip-norm}
\mathbf{L}^{\sigma}(\pi)= \Lip( I\circ \pi)
\end{equation}
is a Lipschitz norm on \(\Lip(X, \R^n)\). Notice that \(\mathbf{L}^\infty=\mathbf{L}^{\abs{\, \cdot \,}_{\infty}}\), where \(\abs{\, \cdot \,}_{\infty}\) denotes the supremum norm on \(\R^n\). The following result can be established by elementary arguments. 

\begin{proposition}\label{prop:main}
Let \(X\) be a complete metric space and \(T\colon \mathcal{D}^n(X)\to\R\) a multilinear functional. Furthermore, let \(\sigma\) be a norm on \(\R^n\) normalized so that the Lebesgue measure of its unit ball is equal to the volume of the Euclidean \(n\)-ball. Then \(T\) is a current if and only if it is an \(\mathbf{L}^\sigma\)-current. Moreover, 
\begin{equation}\label{eq:mass-inequality-comparison}
C^{-1}\cdot \norm{T}_{\bL^\sigma} \leq \,\norm{T} \leq C\cdot  \norm{T}_{\bL^\sigma},
\end{equation}\label{eq:desired-main}
where one can take \(C=n^n\).
\end{proposition}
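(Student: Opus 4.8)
The plan is to push everything, via Proposition~\ref{prop:equivalent-to-ak}, onto a comparison between \(\bL^\sigma\)-currents and \(\bL^\infty\)-currents, and then to transport the defining mass inequality \eqref{eq:mass-estimate} under \emph{linear reparametrizations} of \(\pi\). First I would dispose of the qualitative equivalence and of axioms \ref{it:one}--\ref{it:two}. Since \(\sigma\) and the supremum norm \(\abs{\,\cdot\,}_\infty\) are two norms on the fixed finite-dimensional space \(\R^n\), they are bi-Lipschitz equivalent, so a sequence \((\pi_i)\) has \(\sup_i \bL^\sigma(\pi_i)<\infty\) if and only if \(\sup_i \bL^\infty(\pi_i)<\infty\); hence axiom \ref{it:one} holds for \(\bL^\sigma\) exactly when it holds for \(\bL^\infty\), while axiom \ref{it:two} does not mention the Lipschitz norm at all. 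Thus the entire content is axiom \ref{it:three} together with the quantitative bound \eqref{eq:mass-inequality-comparison}, and by Proposition~\ref{prop:equivalent-to-ak} we have \(\norm{T}=\norm{T}_{\bL^\infty}\), so it suffices to compare \(\norm{T}_{\bL^\sigma}\) with \(\norm{T}_{\bL^\infty}\).

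The key mechanism is the determinant-covariance of currents. For \(G\in GL_n(\R)\) and \(\pi\in\Lip(X,\R^n)\), multilinearity together with the alternating property of currents gives \(T(f,G\circ\pi)=\det(G)\,T(f,\pi)\). Moreover, if \(G(B_\sigma)\subseteq B_\infty\) (writing \(B_\sigma,B_\infty\) for the unit balls of \(\sigma\) and \(\abs{\,\cdot\,}_\infty\)), then \(\abs{G v}_\infty\le \sigma(v)\) for all \(v\), so \(\bL^\infty(G\circ\pi)\le \bL^\sigma(\pi)\). Combining these with the defining inequality for \(\norm{T}_{\bL^\infty}\) yields, for every \((f,\pi)\),
\[
\abs{T(f,\pi)}=\abs{\det G}^{-1}\abs{T(f,G\circ\pi)}\le \abs{\det G}^{-1}\bL^\sigma(\pi)^{\,n}\int_X \abs{f}\,d\norm{T}_{\bL^\infty},
\]
so that \(\abs{\det G}^{-1}\norm{T}_{\bL^\infty}\) is admissible in \eqref{eq:mass-estimate} for \(\bL^\sigma\); by minimality \(\norm{T}_{\bL^\sigma}\le \abs{\det G}^{-1}\norm{T}_{\bL^\infty}\). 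Running the same computation with a map \(H\) satisfying \(H(B_\infty)\subseteq B_\sigma\) gives the reverse estimate \(\norm{T}_{\bL^\infty}\le \abs{\det H}^{-1}\norm{T}_{\bL^\sigma}\). It remains to choose \(G\) and \(H\) with \(\abs{\det G}\) and \(\abs{\det H}\) as large as possible.

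This is where the volume normalization enters, and I would optimize using John's ellipsoid. Let \(E\) be the maximal-volume ellipsoid inscribed in the symmetric body \(B_\sigma\); John's theorem gives \(E\subseteq B_\sigma\subseteq \sqrt n\,E\). Choosing \(A\in GL_n(\R)\) with \(A(B^n_2)=E\), where \(B^n_2\) is the Euclidean ball, we obtain \(B^n_2\subseteq A^{-1}B_\sigma\subseteq \sqrt n\,B^n_2\), and since \(\Leb(A^{-1}B_\sigma)=\abs{\det A}^{-1}\Leb(B_\sigma)\) while \(\Leb(B_\sigma)=\omega_n=\Leb(B^n_2)\) by the normalization, this forces \(n^{-n/2}\le \abs{\det A}\le 1\). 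Using the elementary containments \(B^n_2\subseteq B_\infty\subseteq \sqrt n\,B^n_2\), I set \(G=n^{-1/2}A^{-1}\), which satisfies \(G(B_\sigma)\subseteq B_\infty\) with \(\abs{\det G}\ge n^{-n/2}\), and \(H=n^{-1/2}A\), which satisfies \(H(B_\infty)\subseteq B_\sigma\) with \(\abs{\det H}\ge n^{-n}\). Feeding these into the two estimates of the previous paragraph gives \(\norm{T}_{\bL^\sigma}\le n^{n/2}\norm{T}_{\bL^\infty}\) and \(\norm{T}_{\bL^\infty}\le n^{n}\norm{T}_{\bL^\sigma}\); recalling \(\norm{T}=\norm{T}_{\bL^\infty}\), both halves of \eqref{eq:mass-inequality-comparison} follow with \(C=n^n\). (The asymmetry between the exponents reflects that inscribing the cube \(B_\infty\) into \(B_\sigma\) is the binding direction.)

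The main obstacle is the determinant-covariance \(T(f,G\circ\pi)=\det(G)\,T(f,\pi)\) for non-diagonal \(G\): for diagonal \(G\) it is immediate from multilinearity, but the general case rests on the alternating property of \(\bL\)-currents, which I would derive from the locality and continuity axioms exactly as Ambrosio and Kirchheim do for classical metric currents. I would also stress that the naive bi-Lipschitz comparison of \(\sigma\) with \(\abs{\,\cdot\,}_\infty\) is useless for the quantitative claim—its constants are \emph{not} controlled by the volume normalization alone, as a thin-and-long normalized ball shows—so it is precisely the determinant (volume) bookkeeping, refined by John's theorem, that produces the universal constant \(n^n\).
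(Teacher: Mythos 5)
Your proof is correct and follows essentially the same route as the paper: the qualitative equivalence is reduced to the bi-Lipschitz equivalence of \(\sigma\) and \(\abs{\,\cdot\,}_\infty\), and the quantitative mass comparison comes from the chain rule \(T(f, G\circ \pi)=\det(G)\, T(f,\pi)\) combined with John's ellipsoid theorem and the volume normalization of \(B_\sigma\). The only real difference is in the bookkeeping: the paper compares \(\norm{T}_{\bL^\sigma}\) against \(\norm{T}_{\bL^2}\) using a single map onto the John ellipsoid (and asserts \(\det F=1\)), whereas you compare directly against \(\norm{T}_{\bL^\infty}=\norm{T}\) with two maps \(G, H\) and the explicit bounds \(n^{-n/2}\leq \abs{\det A}\leq 1\) — your version is, if anything, the more carefully quantified of the two.
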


In view of Proposition~\ref{prop:main}, it therefore seems appropriate to consider metric currents as functionals to which a whole family of volume measures can naturally be assigned. Depending on the situation, some of these measures will be more suitable than others. It seems to be an interesting question to investigate further under what conditions on \(\bL\), an  \(\bL\)-current is also a metric current.

 \subsection{Rectifiable \(\bL^\sigma\)-currents }

In this section we give an explicit description of \(\norm{T}_{\bL^\sigma}\) for rectifiable currents \(T\). Most major results in the theory of metric currents are formulated for such currents. In fact, in many cases only integer-rectifiable or even integral currents are considered. Most notably, Plateau's problem in metric spaces \cite{ambrosio-2000, wenger-2005, wenger-2014, schmidt-2013} is usually formulated for integral currents, a class of currents with particularly good compactness properties. Integer-rectifiable currents can be seen as generalizations of \(n\)-rectifable sets, which in turn should be thought of as a rough metric version of smooth \(n\)-manifolds.
Recall that a \(\Haus^n\)-measurable subset \(S\subset X\) of a metric space \(X\) is called \textit{\(n\)-rectifiable} if there are countably many Borel sets \(A_i\subset \R^n\) and Lipschitz maps \(\varphi_i\colon A_i \to X\) such that
\begin{equation}\label{eq:kirchheim-cover}
\Haus^n\Big( S \setminus \bigcup_{i=1}^\infty \varphi_i(A_i)\Big)=0.
\end{equation}

% We refer to Section~\ref{sec:three} for the precise definition of \(n\)-rectifiable sets. 

The definition of a rectifiable metric current from \cite{ambrosio-2000} can be adopted literally to our situation.

\begin{definition}
An \(n\)-dimensional \(\mathbf{L}\)-current \(T\) is called \textit{rectifiable} if its mass measure \(\norm{T}_{\mathbf{L}}\) vanishes on \(\Haus^n\)-negligible Borel sets and is concentrated on an \(n\)-rectifiable set.    
\end{definition}
 
Our main result gives an explicit description of \(\norm{T}_{\mathbf{L}^\sigma}\) for rectifiable currents, see Theorem~\ref{thm:main-2} below. This description relies on Jacobians, a notion which is in one-to-one correspondence with Finsler volume functionals. By definition, a Jacobian \(\J^\bullet\) is a real-valued map defined on the set \(\Sigma\) of all semi-norms on  \(\R^n\) satisfying three natural axioms; see Section~\ref{sec:four}. We refer to \cite{thompson-2004} for a general survey article on Jacobians and related notions. If \(\sigma\) is a fixed norm on \(\R^n\) with unit ball \(B_\sigma\),  then the following map defines a Jacobian. For every norm \(s\in \Sigma\) we set
\begin{equation}\label{eq:jacobi}
\J^{\sigma}(s)=\sup_{F} \frac{\Leb^n(B_\sigma)}{\mathscr{L}^n(F^{-1}(B_\sigma))},
\end{equation}
where the supremum is taken over all linear maps \(F\colon \R^n \to \R^n\) such that \(F^{-1}(B_\sigma)\) contains the unit ball of \(s\). For any other \(s\in \Sigma\) which is not a norm, we define \(\J^\sigma(s)=0\). If \(\sigma\) equals \(\abs{\, \cdot \,}_{\infty}\) or \(\abs{\,\cdot\,}_2\), this recovers the mass* Jacobian or the circumscribed Riemannian Jacobian, respectively.

% For a rectifiable current \(T\), the mass measure \(\norm{T}\) admits a very explicit expression in terms of Gromov’s mass* Jacobian \(\J^{m\ast}\). 

In order to state our main result, we now recall how a Jacobian  naturally induces a volume measure on a rectifiable set. Kirchheim showed that the covering \eqref{eq:kirchheim-cover} of a rectifiable set by Lipschitz pieces can be improved (see \cite[Lemma~4]{kirchheim-1994}). If \(S\subset X\) is \(n\)-rectifiable, then there are compact subsets \(K_i\subset \R^n\) and bi-Lipschitz embeddings \(\varphi_i\colon K_i \to X\) with pairwise disjoint images such that \(S\) and \(\bigcup_{i=1}^\infty \varphi_i(K_i)\) are equal up to a set of \(\Haus^n\)-measure zero. Such collection \((K_i, \varphi_i)\) is called a bi-Lipschitz parametrization of \(S\). Suppose now that we are given such a parametrization. Then
\[
\mu_S^\bullet(A)=\sum_{i=1}^\infty \int_{\varphi_i^{-1}(A)} \J^\bullet(\md_p \varphi_i)\, \text{d}p
\]
defines a Borel measure on \(X\). Here, \(\md_p \varphi\) denotes the metric differential of \(\varphi\). We recall the necessary background from metric analysis in Section~\ref{sec:four}. The measure \(\mu_S^\bullet\) is called \textit{Finsler volume} associated to \(S\). It is not difficult to show that \(\mu_S^\bullet\) does not depend on the choice of bi-Lipschitz parametrization \((K_i, \varphi_i)\). 

Let \(\mu_S^\sigma\) denote the Finsler volume induced by the Jacobian \(J^\sigma\) defined in \eqref{eq:jacobi}.  As it turns out, for a rectifiable current \(T\) the mass measure \(\norm{T}_{\bL^\sigma}\) is absolutely continuous with respect to \(\mu^\sigma_S\).

\begin{theorem}\label{thm:main-2}
Let \(T\) be an \(n\)-dimensional, rectifiable \(\bL^\sigma\)-current in a complete metric space \(X\). Suppose \(\norm{T}_{\bL^\sigma}\) is concentrated on the \(n\)-rectifiable set \(S\subset X\). Then 
\[
 \norm{T}_{\bL^\sigma}= \abs{\theta} \, \mu^\sigma_S,
\]
where \(\theta\) denotes the multiplicity function of \(T\).
% Then there exists a non-negative Borel function \(\theta\colon X\to \R\) such that \(\norm{T}_{\bL^\sigma}= \theta \, \mu^\sigma_S\). 
\end{theorem}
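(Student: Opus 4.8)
The plan is to localize the computation to a single bi-Lipschitz chart and there to identify the density of \(\norm{T}_{\bL^\sigma}\) by comparing the current with its linear model. First I fix a bi-Lipschitz parametrization \((K_i, \varphi_i)\) of \(S\) and use the rectifiable structure of \(T\) to write \(T=\sum_i \varphi_{i\#}\bb{\theta_i}\), where \(\bb{\theta_i}\) denotes integration of \(\theta_i\,\Leb^n\) over \(K_i\subset\R^n\) and \(\theta=\theta_i\circ\varphi_i^{-1}\) on the pairwise disjoint pieces \(\varphi_i(K_i)\). Since both \(\norm{T}_{\bL^\sigma}\) and \(\mu^\sigma_S\) are countably additive over these disjoint images, it suffices to treat one chart, i.e.\ to show
\[
\norm{\varphi_\#\bb{g}}_{\bL^\sigma}=\varphi_\#\bigl(\abs{g}\,\J^\sigma(\md_\cdot\varphi)\,\Leb^n\bigr)
\]
for a single bi-Lipschitz \(\varphi\colon K\to X\) and \(g\in L^1(K)\); the right-hand side is exactly \(\abs{\theta}\,\mu^\sigma_S\) restricted to \(\varphi(K)\). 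Throughout I use the reformulation \(\J^\sigma(s)=\sup\{\abs{\det F}:F\colon(\R^n,s)\to(\R^n,\sigma)\text{ linear with }\Lip(F)\le 1\}\), which follows from \eqref{eq:jacobi} via the identity \(\Leb^n(F^{-1}(B_\sigma))=\abs{\det F}^{-1}\Leb^n(B_\sigma)\) and the equivalence \(F^{-1}(B_\sigma)\supseteq B_s\Leftrightarrow\Lip(F)\le 1\).

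For the upper bound, let \((f,\pi)\in\mathcal{D}^n(X)\) with \(\bL^\sigma(\pi)\le 1\), so that \(\pi\) is \(1\)-Lipschitz as a map into \((\R^n,\sigma)\). By Kirchheim's metric differentiability theorem the seminorm \(s_p=\md_p\varphi\) exists for \(\Leb^n\)-a.e.\ \(p\in K\), and \(\pi\circ\varphi\colon K\to\R^n\) is differentiable a.e.\ by Rademacher's theorem. Passing to the limit in difference quotients shows \(\abs{D_p(\pi\circ\varphi)v}_\sigma\le s_p(v)\) for all \(v\), i.e.\ \(D_p(\pi\circ\varphi)\) is a \(1\)-Lipschitz linear map \((\R^n,s_p)\to(\R^n,\sigma)\); hence \(\abs{\det D_p(\pi\circ\varphi)}\le\J^\sigma(s_p)\). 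Expanding \(\varphi_\#\bb{g}(f,\pi)=\int_K(f\circ\varphi)\,g\,\det D(\pi\circ\varphi)\,dp\) and using injectivity of \(\varphi\) to change variables yields \(\abs{\varphi_\#\bb{g}(f,\pi)}\le\int_X\abs{f}\,d\varphi_\#(\abs{g}\J^\sigma(s_\cdot)\Leb^n)\); rescaling an arbitrary \(\pi\) by \(\bL^\sigma(\pi)\) gives the mass estimate \eqref{eq:mass-estimate} with \(\mu=\varphi_\#(\abs{g}\J^\sigma(s_\cdot)\Leb^n)\), hence \(\norm{T}_{\bL^\sigma}\le\mu\).

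For the reverse inequality I argue at \(\Leb^n\)-a.e.\ density point \(p\) where \(s_p\) is a norm and \(g\) is approximately continuous. The idea is to compare \(T\) near \(\varphi(p)\) with its linear model, the tangent current \(g(p)\,\bb{(\R^n,s_p)}\): after rescaling the chart, metric differentiability makes \(\varphi\) asymptotically affine with linear part inducing \(s_p\), so the rescaled currents converge to this model. Choosing a linear \(F\) nearly attaining the supremum defining \(\J^\sigma(s_p)\), the pair \((f,F)\) is admissible in the flat model precisely because \(F\) is globally \(1\)-Lipschitz from \((\R^n,s_p)\) into \((\R^n,\sigma)\), and it realizes the density value \(\abs{g(p)}\,\abs{\det F}\approx\abs{g(p)}\,\J^\sigma(s_p)\) relative to \(\Leb^n\). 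Transporting this competitor back by a Lipschitz extension of \(F\circ\varphi^{-1}\) whose \(\sigma\)-Lipschitz constant tends to \(1\) as the neighborhood shrinks---using locality of \(T\) (a consequence of axioms \ref{it:one}--\ref{it:three}) so that only the behavior near \(\spt f\) matters---and applying the Lebesgue differentiation theorem shows that the density of \(\norm{T}_{\bL^\sigma}\) with respect to \(\varphi_\#\Leb^n\) is at least \(\abs{g(p)}\,\J^\sigma(s_p)\). On the exceptional set where \(s_p\) fails to be a norm one has \(\J^\sigma(s_p)=0\), and a dimension-collapse argument shows this set carries no \(\norm{T}_{\bL^\sigma}\)-mass, so both sides vanish there. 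Combining the two bounds gives the density identity on each chart, and summation over \(i\) yields \(\norm{T}_{\bL^\sigma}=\abs{\theta}\,\mu^\sigma_S\).

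The main obstacle is the sharp lower bound: producing admissible test tuples whose infinitesimal distortion saturates the supremum defining \(\J^\sigma(s_p)\) while keeping \(\bL^\sigma(\pi)\le 1+o(1)\). The difficulty is that a Lipschitz extension of a \(\sigma\)-valued map to the whole space need not preserve the Lipschitz constant; this is resolved by passing to the tangent current in \((\R^n,s_p)\), where the optimal linear competitor is globally admissible with exact \(\sigma\)-operator norm, so the flat model attains the Jacobian without any extension loss. Controlling the blow-up convergence and the lower semicontinuity of the \(\bL^\sigma\)-mass, together with the negligibility of the degenerate-seminorm locus, is where the technical weight of the proof lies.
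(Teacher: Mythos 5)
Your setup and your upper bound are sound and essentially coincide with the paper's: taking the parametric representation \(T=\sum_i\varphi_{i\#}\bb{\theta_i}\) as given (in the paper this is Proposition~\ref{prop:parametric}, proved via Lemma~\ref{lem:main-technical-lemma}, but since the statement presupposes a multiplicity function this is a fair starting point), you observe that for \(\bL^\sigma(\pi)\le 1\) the differential \(D_p(\pi\circ\varphi)\) is \(1\)-Lipschitz from \((\R^n,\md_p\varphi)\) to \((\R^n,\sigma)\), hence \(\abs{\det D_p(\pi\circ\varphi)}\le \J^\sigma(\md_p\varphi)\); this is exactly Lemma~\ref{lem:desired-inequality}, and it yields \(\norm{T}_{\bL^\sigma}\le\abs{\theta}\,\mu_S^\sigma\).

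The lower bound, however, has a genuine gap. Its pivotal step is the claim that \(F\circ\varphi^{-1}\), defined on \(\varphi(K\cap B(p,r))\), admits Lipschitz extensions to \(X\) whose \(\sigma\)-Lipschitz constant tends to \(1\) as \(r\to 0\). Metric differentiability at \(p\) only provides the \emph{additive} estimate \(\abs{\,d(\varphi(q),\varphi(q'))-(\md_p\varphi)(q-q')\,}=o(\abs{q-p}+\abs{q'-p})\); for pairs \(q,q'\) with \(\abs{q-q'}\) much smaller than \(r\) this error swamps \((\md_p\varphi)(q-q')\), so on the shrinking neighborhoods the Lipschitz constant of \(F\circ\varphi^{-1}\) is controlled only by the global bi-Lipschitz constant of \(\varphi\), not by \(\Lip\bigl(F\colon(\R^n,s_p)\to(\R^n,\sigma)\bigr)\). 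Upgrading this additive almost-isometry to a multiplicative \((1+\epsilon)\)-bi-Lipschitz control is precisely the content of Kirchheim's decomposition lemma \cite[Lemma~4]{kirchheim-1994}, and this is how the paper argues: each chart is split into compact pieces on which \(\varphi\) factors as an \(L\)-bi-Lipschitz map composed with the identity onto a fixed normed space \(V\); in the exactly flat model the optimal competitor \(\pi^\ast=F\circ I^{-1}\) is a \emph{globally} defined linear map with \(\bL^\sigma(\pi^\ast)\le 1\), so no extension loss occurs (Lemma~\ref{lem:special-case}), and one concludes by letting \(L\to 1\). The blow-up framework you invoke instead --- convergence of rescaled currents to a tangent current and lower semicontinuity of the \(\bL^\sigma\)-mass under that convergence --- is substantial machinery that your sketch neither develops nor needs: once the Kirchheim decomposition is in hand, \(T(f,\pi)\) can be evaluated directly from the parametric representation, since \(\pi\circ\varphi=F\) on the relevant piece. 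As written, the reverse inequality does not go through.
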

See Theorem~\ref{thm:formula-for-the-mass-measure} below for a more detailed version of the theorem. This generalizes the well-known fact that \(\norm{T}\) is absolutely continuous with respect to the Gromov mass\(\ast\) volume measure \(\mu^{m\ast}\); see \cite[Theorem 9.5]{ambrosio-2000}. In the next subsection, we collect some corollaries of Theorem~\ref{thm:main-2}. 

% Here, \(\J^{\ir}\) denotes Ivanov's inscribed Riemannian Jacobian. But in fact any other Jacobian could be used instead. 
 
%  The main reason for why \eqref{eq:mass-inequality-comparison} is valid is because of the fact that \(\norm{T}_{\bL^{\sigma}}\) admits a similar expression in terms of a certain Jacobian \(\J^{\sigma}\); see Theorem~\ref{thm:formula-for-the-mass-measure}.  The constant \(C\) is then nothing but the maximal distortion factor 
% \[
% q^\sigma=\sup_{s\in \Sigma_0} \frac{\J^{\sigma}(s)}{\J^{m\ast}(s)},
% \]
% where \(\Sigma_0\subset \Sigma\) denotes the subset consisting of all norms on \(\R^n\). 

\subsection{Applications to Finsler volumes}\label{sec:applics}
We now proceed with two applications of \(\bL^\sigma\)-currents to the theory of volumes considered in Finsler geometry. If \(\sigma\) is equal to the standard Euclidean norm \(\abs{\,\cdot\,}_2\) on \(\R^n\), then \(J^\sigma\) recovers the circumscribed Riemannian Jacobian \(J^{\crr}\). To make the following results more accessible we formulate them only in terms of \(\J^{\crr}\). But it is important to keep in mind that they actually apply to all Jacobians \(J^\sigma\). In particular, Proposition~\ref{thm:extendabily-convex-allgemein} below provides a whole family of extendibly convex \(n\)-volume densities. 

We abbreviate \(\bL^2:=\bL^{\abs{\,\cdot\,}_2}\). Recall that any closed oriented connected Riemannian \(n\)-manifold \((M, g)\) naturally induces a current \(\bb{M}\) in \((M, d_g)\) by integrating Lipschitz \(n\)-forms. See \cite[Example~2.32]{sormani--2011} for the precise definition. A Finsler structure is a continuous map \(\Phi \colon TM \to \R\) such that its restriction to any tangent space is a norm. As with a Riemannian structure, a Finsler structure \(\Phi\) naturally induces a metric \(d_\Phi\) such that \((M, d_\Phi)\) is a complete metric space and the identity map \(\iota\colon (M, d_g) \to (M, d_\Phi)\) is Lipschitz. 

\begin{proposition}\label{prop:explicit-description-L-2}
Suppose Finsler volumes are measured  with respect to the circumscribed Riemannian volume functional. Then \(T:=\iota_\#\bb{M}\) is a rectifiable \(\bL^2\)-current in \((M, d_\Phi)\) and \(\norm{T}_{\bL^2}=\vol_\Phi\).
\end{proposition}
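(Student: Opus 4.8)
The plan is to exhibit $T$ as the pushforward of a rectifiable current, read off its mass measure from Theorem~\ref{thm:main-2}, and then match the abstract Finsler volume appearing there with the classical circumscribed Riemannian volume. The first point I would establish is that $\iota$ is in fact bi-Lipschitz. Since $M$ is compact, the norm $\Phi_p$ and the Riemannian norm $\abs{\,\cdot\,}_{g,p}$ restrict to continuous positive functions on the compact $g$-unit sphere bundle, so there is $C\geq 1$ with $C^{-1}\abs{v}_{g}\leq \Phi_p(v)\leq C\abs{v}_{g}$ for all $(p,v)\in TM$; integrating along curves gives $C^{-1}d_g\leq d_\Phi\leq C\,d_g$. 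Consequently $\iota$ is Lipschitz, so the standard pushforward construction shows that $T=\iota_\#\bb{M}$ is a metric current in $(M,d_\Phi)$, and since the normalization hypothesis of Proposition~\ref{prop:main} holds for $\sigma=\abs{\,\cdot\,}_2$ (its unit ball being the Euclidean ball), $T$ is an $\bL^2$-current. Because $\iota$ is bi-Lipschitz, $(M,d_\Phi)$ is again $n$-rectifiable, and the bi-Lipschitz correspondence of $\Haus^n$-null sets, combined with the comparison \eqref{eq:mass-inequality-comparison}, shows that $\norm{T}_{\bL^2}$ vanishes on $\Haus^n$-negligible sets and is concentrated on $S=M$. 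Hence $T$ is a rectifiable $\bL^2$-current.

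Next I would identify the multiplicity. As $\bb{M}$ is the integration current of the oriented manifold it has constant multiplicity $1$, and since $\iota$ is an injective bi-Lipschitz map there is no folding or cancellation, so the multiplicity function $\theta$ of $T$ satisfies $\abs{\theta}=1$ almost everywhere on $M$. Feeding this into Theorem~\ref{thm:main-2} with $\sigma=\abs{\,\cdot\,}_2$ gives $\norm{T}_{\bL^2}=\abs{\theta}\,\mu^{\crr}_M=\mu^{\crr}_M$, the Finsler volume associated to the circumscribed Riemannian Jacobian $\J^{\crr}$.

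The remaining and genuinely substantive step is to check that $\mu^{\crr}_M$ coincides with $\vol_\Phi$. I would use the inverse smooth charts $\varphi_i=\psi_i^{-1}$ as a bi-Lipschitz parametrization of $(M,d_\Phi)$ and compute the metric differential with respect to $d_\Phi$: since the $d_\Phi$-distance of infinitesimally close points is controlled by $\Phi$, one gets $\md_p\varphi_i=\Phi_{\varphi_i(p)}\circ d(\varphi_i)_p$, a norm on $\R^n$. By the definition of $\J^{\crr}$ in \eqref{eq:jacobi}, its value on this norm equals $\Leb^n(B_{\abs{\,\cdot\,}_2})/\Leb^n(E_p)$, where $E_p$ is the minimal-volume ellipsoid containing the unit ball of $\md_p\varphi_i$; as the Riemannian volume density of the inner product whose unit ball is $E_p$ is, in these coordinates, exactly this same ratio, the integrand $\J^{\crr}(\md_p\varphi_i)\,\text{d}p$ is the pullback of the circumscribed Riemannian volume density. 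Summing over the charts then yields $\mu^{\crr}_M=\vol_\Phi$, completing the proof. The main obstacle is precisely this identification: one must verify both that the metric differential of a smooth parametrization taken in the Finsler distance reproduces $\Phi$ fiberwise, and that the volume normalization built into \eqref{eq:jacobi} matches the determinant normalization of the Riemannian volume element, i.e.\ the bookkeeping around the John ellipsoid.
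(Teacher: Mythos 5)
Your proposal is correct and follows essentially the same route as the paper: reduce, via Theorem~\ref{thm:formula-for-the-mass-measure}, to producing a parametric representation of \(T=\iota_\#\bb{M}\) with \(\abs{\theta_i}=1\), and then identify \(\mu^{\crr}_{(M,d_\Phi)}\) with \(\vol_\Phi\) through the computation \(\md_p\varphi=\Phi(\varphi(p),D_p\varphi(\cdot))\) (which the paper records as a remark immediately before its proof). The only divergence is at the unit-multiplicity step, where you assert \(\abs{\theta}=1\) from ``no folding or cancellation'' in charts, whereas the paper derives the representation \(\bb{M}=\sum_{i=1}^N t_\#\bb{\Delta_i}\) from a bi-Lipschitz triangulation together with the Federer--Fleming deformation theorem --- your claim is true but is the one point you assert rather than prove.
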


In \cite{ivanov-2009}, Ivanov proved that with respect to the inscribed Riemannian volume, Finsler filling volumes and Riemannian filling volumes agree. Hence, it seems worthwhile to investigate whether there exists a Lipschitz norm \(\bL\) such that Proposition~\ref{prop:explicit-description-L-2} is also valid for the inscribed Riemannian volume. 

Our second application deals with convexity properties of \(n\)-volume densities, a key object of study in Minkowski geometry with many applications in other fields. Let \(X\) be a finite-dimensional normed vector space of dimension strictly bigger than \(n\). We denote by \(\Lambda^n X\) the \(n\)-th exterior power of \(X\) and by \(\Lambda^n_s X \subset \Lambda^n X\) the cone of all simple \(n\)-vectors.  

\begin{definition}
A continuous map \(\phi \colon \Lambda^n_s X \to \R\) is called \textit{\(n\)-volume density} if \(\phi(\lambda a)=\abs{\lambda}\, \phi(a)\) for all \(a\in \Lambda_s^n X\) and all \(\lambda \in \R\), and it holds \(\phi(a)\geq 0\) with equality if and only if \(a=0\).    
\end{definition} 

For example, let \(\phi^b( v_1 \wedge \dotsm \wedge v_n)\) be equal to the \(\Haus^n_{\hspace{-0.25em}X}\)-measure of the parallelotope spanned by the vectors \(v_1, \ldots, v_n\in X\). This defines an \(n\)-volume density, called the Busemann \(n\)-volume density. An important open question, going back to Busemann, is whether \(\phi^b\) can always be extended to a norm on \(\Lambda^n X\). Busemann showed that this is possible in codimension one (i.e.~ if \(\dim X=n+1\)). Moreover, a breakthrough result of Burago and Ivanov \cite{burago-2012} gives an affirmative answer for \(n=2\). These results constitute the only known progress on this difficult question. See also \cite{2022arXiv220317160V} for a recent related  result. 

Any definition of volume gives rise to an \(n\)-volume density in a natural way. As it turns out, the analogues of Busemann's question have affirmative answers when volume densities induced by Gromov's mass*, Ivanov's inscribed Riemannian, or Bernig's definition of volume are considered (see \cite{thompson-2004, bernig-2014, ivanov-2009}). Our next result shows that this is also the case for the circumscribed Riemannian definition of volume. 

\begin{theorem}\label{prop:extendabily-convex}
For any finite-dimensional normed space \(X\), the \(n\)-volume density \(\phi\colon \Lambda^n_s X \to \R\) induced by the circumscribed Riemannian definition of volume is extendibly convex, that is, \(\phi\) is the restriction of a norm on \(\Lambda^n X\) to the cone of simple \(n\)-vectors.
\end{theorem}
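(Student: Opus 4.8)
The plan is to reduce the statement to the \emph{semi-ellipticity} of the induced density and then to extract semi-ellipticity from the lower semicontinuity of the \(\bL^2\)-mass. Write \(\phi=\phi^{\crr}\) for the \(n\)-volume density induced by \(\J^{\crr}=\J^{\abs{\,\cdot\,}_2}\). Recall that \(\phi\) is \emph{semi-elliptic} if \(\phi(\xi)\le\sum_{i=1}^k\phi(a_i)\) whenever \(\xi,a_1,\dots,a_k\in\Lambda^n_s X\) and \(\xi=\sum_i a_i\) in \(\Lambda^n X\). First I would record the elementary equivalence between extendible convexity and semi-ellipticity. Set
\[
\bar\phi(\eta)=\inf\Big\{\textstyle\sum_i\phi(a_i): \eta=\sum_i a_i,\ a_i\in\Lambda^n_s X\Big\},\qquad \eta\in\Lambda^n X.
\]
Since simple vectors span \(\Lambda^n X\), the functional \(\bar\phi\) is finite, positively homogeneous and subadditive, i.e.\ a seminorm, and \(\bar\phi\le\phi\) on \(\Lambda^n_s X\). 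As \(\phi\) is continuous and strictly positive on the compact set of unit simple vectors, it is bounded below there by some \(c>0\); the triangle inequality then gives \(\bar\phi(\eta)\ge c\,\abs{\eta}\), so \(\bar\phi\) is in fact a norm. Consequently \(\phi\) is extendibly convex as soon as \(\bar\phi=\phi\) on \(\Lambda^n_s X\), and this equality is precisely semi-ellipticity. Thus it suffices to prove that \(\phi\) is semi-elliptic.

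To prove semi-ellipticity I would pass to currents in the normed space \(X\). To a simple \(n\)-vector \(a=v_1\wedge\dots\wedge v_n\) spanning an \(n\)-plane \(W\) one associates the flat current \(T_a\) of integration over the parallelotope with edges \(v_1,\dots,v_n\), oriented by \(a\) and taken with multiplicity one; this is a rectifiable \(\bL^2\)-current whose affine parametrization has constant metric differential equal to the restriction of \(\norm{\,\cdot\,}_X\) to \(W\). Hence Theorem~\ref{thm:main-2} together with \eqref{eq:jacobi} gives, writing \(\bM_{\bL^2}(T):=\norm{T}_{\bL^2}(X)\),
\[
\bM_{\bL^2}(T_a)=\int \J^{\crr}(\md_p)\,\mathrm{d}p=\phi(a).
\]
Given a decomposition \(\xi=\sum_i a_i\), the standard polyhedral realization of this vector identity produces, at each scale, a polyhedral \(\bL^2\)-current assembled from finely tiled, translated copies of the pieces \(T_{a_i}\), having the same boundary \(\di T_\xi\) as the flat \(\xi\)-disk and total mass \(\sum_i\phi(a_i)\).

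The decisive step is lower semicontinuity of the \(\bL^2\)-mass. As in the Ambrosio--Kirchheim theory, the total mass admits a variational description as a supremum of functionals \(T\mapsto\sum_j T(f_j,\pi_j)\) that are continuous under weak convergence of currents, so \(T\mapsto\bM_{\bL^2}(T)\) is lower semicontinuous. Choosing the tiling of the previous step so finely that the competitors \(P_m\) converge weakly to \(T_\xi\) while \(\bM_{\bL^2}(P_m)\to\sum_i\phi(a_i)\), lower semicontinuity yields
\[
\phi(\xi)=\bM_{\bL^2}(T_\xi)\le\liminf_{m}\bM_{\bL^2}(P_m)=\sum_i\phi(a_i),
\]
which is exactly semi-ellipticity. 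I expect the main obstacle to be precisely this step: establishing lower semicontinuity of \(\norm{\,\cdot\,}_{\bL^2}\) in the modified framework, and constructing the recovery sequence \(P_m\) that converges weakly to \(T_\xi\) with asymptotically optimal mass (this is the classical mechanism by which lower semicontinuity of a parametric integrand forces its semi-ellipticity, and it is where the genuine content lies, since—unlike for the mass\(\ast\) density, whose target \(\ell^\infty_n\) enjoys the norm-one extension property—no single short map into \(\ell^2_n\) need realize \(\phi\) globally). Finally, nothing in the argument is special to \(\abs{\,\cdot\,}_2\): replacing \(\bL^2\) by \(\bL^\sigma\) and \(\J^{\crr}\) by \(\J^\sigma\) throughout shows that every \(\phi^\sigma\) is extendibly convex, establishing the family of volume densities advertised in the remark preceding the statement.
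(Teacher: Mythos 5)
There is a genuine gap, and it sits exactly where you flagged ``the genuine content'' to be --- but it is deeper than you suggest. Your reduction via \(\bar\phi\) is correct and elementary; however, note that what it reduces the theorem to is the \emph{algebraic} subadditivity \(\phi(\xi)\le\sum_i\phi(a_i)\) for decompositions \(\xi=\sum_i a_i\) into simple vectors, which is \emph{not} the same as semi-ellipticity over \(\R\) in the sense used in the paper (Definition~\ref{def:semi-elliptic}, a Plateau-type minimality of flat disks among Lipschitz chains with prescribed boundary). The bridge between these two conditions is precisely \cite[Theorem~3]{ivanov-2004}, which the paper invokes as a deep black box. Your step ``the standard polyhedral realization of this vector identity produces \dots\ a polyhedral \(\bL^2\)-current \dots\ having the same boundary as the flat \(\xi\)-disk and total mass \(\sum_i\phi(a_i)\)'' is exactly the nontrivial direction of that theorem: given an arbitrary algebraic decomposition of a simple \(n\)-vector into simple \(n\)-vectors spanning \emph{different} \(n\)-planes of \(X\), there is no standard corrugation, laminate, or tiling construction producing competitors \(P_m\) with \(\partial P_m=\partial T_\xi\) and mass converging to \(\sum_i\phi(a_i)\); in codimension greater than one the planes of the \(a_i\) need not even be graphs over the plane of \(\xi\), and the realizability of such decompositions by chains is the hard geometric content of Burago--Ivanov's result. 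As written, your argument therefore assumes what is essentially the theorem being cited.

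For comparison, the paper runs the implication the other way and never constructs competitors: it proves the Plateau-type semi-ellipticity of Definition~\ref{def:semi-elliptic} directly, by producing (Lemma~\ref{lem:desired-inequality}) a single calibrating \(\pi^*\) with \(\bL^2(\pi^*)\le 1\) on the \(n\)-plane of \(B_*\) so that \(\vol_\phi(B_*)=\mass_{\bL^2}(\bb{B_*})=\bb{B_*}(1,\pi^*)=\bb{S}(1,\pi^*)\le\mass_{\bL^2}(\bb{S})\le\vol_\phi(S)\) for \emph{any} competitor \(S\) with \(\partial S=\partial B_*\); the step \(\mass_{\bL^2}(\bb{S})\le\vol_\phi(S)\) comes from Theorem~\ref{thm:formula-for-the-mass-measure} and subadditivity of the mass measure. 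It then cites \cite[Theorem~3]{ivanov-2004} for the passage from semi-ellipticity to extendible convexity. This also sidesteps lower semicontinuity of \(\bM_{\bL^2}\), which your route would additionally require and which is likewise not established in the \(\bL^\sigma\)-framework (though it is plausible). If you want a self-contained proof along your lines, you must either prove the realization of algebraic decompositions by chains --- i.e.\ reprove the hard half of Burago--Ivanov --- or fall back on citing it, in which case the natural thing to prove with currents is the Plateau-type statement, as the paper does. Your closing remark that the argument generalizes verbatim from \(\abs{\,\cdot\,}_2\) to arbitrary \(\sigma\) does agree with the paper's Theorem~\ref{thm:extendabily-convex-allgemein}.
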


To prove Theorem~\ref{prop:extendabily-convex} with the methods of \(\bL^2\)-currents we need to rely on a deep result of Burago and Ivanov \cite[Theorem~3]{ivanov-2004}. They showed that \(\phi\) is extendibly convex if and only if it is semi-elliptic over \(\R\). This notion goes back to Almgren \cite{almgren-1968} and states that when volumes are measured with respect to \(\phi\), any \(n\)-ball embedded into an \(n\)-dimensional affine subspace of \(X\) has minimal volume among all Lipschitz chains in \(X\) with real coefficients that have the same boundary. For Gromov's mass* volumes this is well-known (see \cite{benson-1966, thompson-1999, gromov-1983}). Ambrosio and Kirchheim \cite[Appendix C]{ambrosio-2000} gave an alternative proof of this fact by using the lower semicontinuity of the mass of a current. Essentially the same proof carries over to \(\bL^2\)-currents, proving Theorem~\ref{prop:extendabily-convex}. This is discussed in more detail in Section~\ref{section:six}.

%%%%%%%%%%%%%%%%%%%%%%%%%%%%%%%%%%%%%%%%%%%%%%%%%%%%%%%

\section{Basic properties of \(\bL\)-currents}
Metric currents enjoy many good functorality properties. For example, there are well-defined push-forward and restriction operators for them. To obtain analogous operations for \(\bL\)-currents, it is inevitable to impose some restrictions on the Lipschitz norms considered. Let \(X\) be a complete metric space and \(\bL\colon \Lip(X, \R^n) \to \R\) a Lipschitz norm. We say that \(\bL\) is \textit{compatible} if 
\[
\bL(\beta\circ \pi)\leq \Lip(\beta)\cdot \bL(\pi)
\]
for every \(\beta\in\Lip(\R^n, \R^n)\) and every \(\pi\in \Lip(X, \R^n)\). For example, the Lipschitz norms \(\bL^{\sigma}\) defined in \eqref{eq:main-example-Lip-norm} are compatible. Suppose from now on that \(\bL\) is compatible and \(T\) is an \(\bL\)-current of dimension \(n\geq 1\). For later use, we repeat in the following an argument due to Lang \cite{lang-2011} to show that \(T\) satisfies the following \textit{strict} locality property. 

Let \(f\in \Lip_b(X)\) and \(\pi\in \Lip(X, \R^n)\) be such that \(\pi^{(1)}=:\psi\) is constant on \(\spt(f)\). Then \(T(f, \pi)=0\). Indeed, we may suppose that \(\psi=0\) on \(\spt(f)\) and thus 
\[
\psi_{m}(x)=
\begin{cases}
0& \text{if } \abs{\psi(x)} \leq \frac{1}{m} \\
\psi(x)-\frac{1}{m} & \text{if } \psi(x)\geq \frac{1}{m} \\
\psi(x)+\frac{1}{m} & \text{if } \psi(x)\leq -\frac{1}{m} \\
\end{cases}
\]
is constant on an open neighborhood of \(\spt(f)\). Letting \(\pi_m=(\psi_m, \pi^{(2)}, \ldots, \pi^{(n)})\), this shows that \(T(f, \pi_m)=0\). By construction, \(\psi_m=\beta_m \circ \psi\) for a certain \(1\)-Lipschitz map \(\beta_m\colon \R^n \to \R^n\). Since \(\bL\) is compatible, we have \(\sup_m \bL(\pi_m) \leq \bL(\pi)\), and because of the continuity axiom \ref{it:two} it therefore follows that \(T(f, \pi_m)\) converges to \(T(f, \pi)\) as \(m\to \infty.\) This yields \(T(f, \pi)=0\), as desired.

We now discuss how to restrict \(\bL\)-currents to Borel subsets. Let \(\mathcal{B}_\infty(X)\) denote the set of all bounded real-valued Borel measurable functions on \(X\). Using the finite mass axiom \ref{it:three}, it is straightforward to show that \(T\) can be uniquely extended to a multilinear functional on tuples with first argument an element of \(\mathcal{B}_\infty(X)\). This follows directly from the 'folklore' result that \(\Lip_b(X, \R)\) is dense in \(L^1(X, \mu)\) for any Borel measure \(\mu\) on \(X\) (see e.g. \cite[Lemma~A.1]{hanneke-2021} for a detailed proof of this fact). By the above, for any Borel subset \(A\subset X\), 
\[
(T \on A)(f, \pi)=T(f\cdot \mathbbm{1}_A, \pi)
\]
defines an \(\bL\)-current on \(X\). Let us briefly explain why  \(T\on A\) satisfies the locality axiom \ref{it:two}. Suppose \(f\in \Lip_b(X, \R)\) and let \(\varphi_i\colon X \to \R\) be bounded Lipschitz functions such that \(\varphi_i \to \mathbbm{1}_A\) in \(L^1(X, \norm{T})\) as \(i\to \infty\). Hence, 
\[
(T\on A)(f, \pi)=\lim_{i\to \infty} T(f\cdot \varphi_i, \pi).
\]
Since \(\spt(f\cdot \varphi_i)\subset \spt(f)\), it follows form the strict locality property established above that if \(\pi\in \Lip(X, \R^n)\) is such that \(\pi^{(1)}\) is constant on \(\spt(f)\), then \(T(f\cdot \varphi_i, \pi)=0\) and so \((T\on A)(f, \pi)=0\). This shows that \(T\on A\) also has the strict locality property. 

By definition, \(T\on A\) is an \(\bL\)-current defined on \(\mathcal{D}^n(X)\). Sometimes it is convenient to consider such currents as currents in \(A\). Suppose \(A\subset X\) is a closed subset and write \(\bL_A\colon \Lip(A, \R^n) \to \R\) for the quotient semi-norm
\[
\bL_A(\pi)=\inf\big\{ \bL(\tilde{\pi}) \, \mid \, \tilde{\pi}\in \Lip(X, \R^n) \text{ with } \tilde{\pi}|_{A}=\pi \big\}.
\]
Furthermore, for \((f, \pi)\in \mathcal{D}^n(A)\) we define
\[
T_A(f, \pi)=(T\on A)(\tilde{f}, \tilde{\pi}),
\]
where \(\tilde{f}\in \Lip_b(X, \R)\) is a Lipschitz extension of \(f\) and \(\tilde{\pi}\in \Lip(X, \R^n)\) a Lipschitz extension of \(\pi\). By the strict locality property established above, this defines a well-defined \(\bL_A\)-current on \(A\). The existence of a bounded Lipschitz extension of \(f\) follows directly from McShane's extension theorem (see e.g. \cite[Theorem~ 1.33]{weaver-2018}). We claim that \(\norm{T_A}_{\bL_A} \leq \norm{T}_{\bL}\on A\). This can be seen as follows. We abbreviate \(\mu=\norm{T}_{\bL}\) and let \(U\subset X\) be an open subset containing \(A\) such that \(\mu(U\setminus A) \leq \epsilon\). The function \(\varphi\colon X \to [0,1]\) defined by
\[
\varphi(x)=\frac{d(x, X\setminus U)}{d(x, X\setminus U)+d(x, A)}
\]
is continuous,  \(\varphi(x)=1\) for all \(x\in A\) and \(\varphi(x)=0\) for all \(x\in X\setminus U\). Hence, if \(\tilde{f}\) is a Lipschitz extension of \(f\) with \(\norm{\tilde{f}}_\infty=\norm{f}_{\infty}\), then letting \(C=\big[\bL(\tilde{\pi})\big]^n\), we find that
\[
T(\varphi\cdot \tilde{f}, \tilde{\pi})\leq C \int_X \varphi\cdot \tilde{f} \, d\mu \leq C\int_{A} f\, d\mu+C\cdot\norm{f}_\infty\cdot \epsilon.
\]
This shows that \(\norm{T_A}_{\bL_A}\leq \mu\on A\), as desired. 

The rest of this section is about push-forwards of currents. Let \(\varphi\colon X \to Y\) be a Lipschitz map between complete metric spaces. We consider the semi-norm \(\varphi_\# \bL\colon \Lip(Y,  
 \R^n)\to \R\) defined by \(
(\varphi_\#\bL)(\pi)=\bL(\pi \circ \varphi)\).  It is easy to check that setting
\[
\varphi_{\#}T(f, \pi)=T(f\circ \varphi, \pi \circ \varphi)
\]
for all \((f, \pi)\in \mathcal{D}^n(Y)\) defines a \(\varphi_\# \bL\)-current in \(Y\). Notice that 
\[
\norm{\varphi_\# T}_{\varphi_\# \bL} \leq \varphi_\# \norm{T}_{\bL} \quad \quad \text{ and } \quad \quad \norm{\varphi_\# T}_{\bL} \leq \big[\Lip(\varphi)\big]^n \cdot\, \varphi_\#\norm{T}_{\bL}
\]
if \(\bL\) is as in \eqref{eq:main-example-Lip-norm}. In practice, one often encounters Lipschitz maps which are defined on a closed subset \(A\subset X\) such that \(\spt T\) is contained in \(A\). For such currents, we occasionally write \(\varphi_\# T\coloneqq \varphi_{\#} (T_A)\) when readability demands it. 

%%%%%%%%%%%%%%%%%%%%%%%%%%%%%%%%%%%%%%%%%%%%%%%%%%%%%%%%

\section{Rectifiable \(\bL\)-currents}\label{sec:three}

Important examples of metric currents are \(n\)-currents in \(\R^n\) induced by \(L^1\)-functions. Suppose \(\theta\in L^1(\R^n)\), then 
\[
\bb{\theta}(f, \pi)\coloneqq \int_{\R^n} \theta \cdot f\cdot \det(D\pi) \,\, d\mathscr{L}^n
\]
induces a multilinear functional on \(\mathcal{D}^n(\R^n)\). The existence of \(D\pi\) at almost every point is guaranteed by Rademacher's theorem. By \cite[Theorems~8.8 and 5.46]{dacorogna-2008}, the function \(I(u)=\int_{\Omega} \theta\cdot f \cdot \det(Du)\, d\mathscr{L}^n\) is weak-\(\ast\) continuous in \(W^{1, \infty}(\Omega)\) for every open subset \(\Omega\subset \R^n\) with Lipschitz boundary. Hence, it is not difficult to show that \(\bb{\theta}\) defines an \(n\)-current in \(\R^n\); see \cite[Example~3.2]{ambrosio-2000}. Push-forwards of these currents can be used as building blocks to obtain a parametric representation of rectifiable currents in general metric spaces (see \cite[Theorem~4.5]{ambrosio-2000}). 

The main result of this section shows that \(\bL\)-currents also admit such a parametric representation.

% Suppose that \(S\subset X\) is \textit{\(n\)-rectifiable}. Recall that this means that \(S\) is \(\Haus^n\)-measurable and there are countably many Borel sets \(A_i\subset \R^n\) and Lipschitz maps \(\varphi_i\colon A_i \to X\) such that
% \[
% \Haus^n\Big( S \setminus \bigcup_{i=1}^\infty \varphi_i(A_i)\Big)=0.
% \]
% Due to a result of Kirchheim (see \cite[Lemma~4]{kirchheim-1994}) this covering by Lipschitz pieces up to a set of measure zero can be improved. If \(S\subset X\) is \(n\)-rectifiable, then there are compact subsets \(K_i\subset \R^n\) and bi-Lipschitz embeddings \(\varphi_i\colon K_i \to X\) with pairwise disjoint images such that \(S\) and \(\bigcup_{i=1}^\infty \varphi_i(K_i)\) are equal up to a set of \(\Haus^n\)-measure zero. We call any such collection \((K_i, \varphi_i)\) a bi-Lipschitz parametrization of \(S\).

\begin{proposition}\label{prop:parametric}
Let \(\bL\) be a compatible Lipschitz norm and \(T\) an \(n\)-dimensional rectifiable \(\bL\)-current which is concentrated on the \(n\)-rectifiable set \(S\subset X\). Let \((K_i, \varphi_i)\) be a bi-Lipschitz parametrization of \(S\). Then there exist functions \(\theta_i\in L^1(\R^n)\) with \(\spt \theta_i\subset K_i\) such that
\begin{equation}\label{eq:parametric-representation}
T=\sum_{i=1}^\infty \varphi_{i\#}\big(\bb{\theta_i}\big) \quad \quad \text{ and } \quad \quad \norm{T}_{\bL}=\sum_{i=1}^\infty \norm{\, \varphi_{i\#}\big(\bb{\theta_i}\big) \,}_{\bL},
\end{equation}
where we use the notation \(\varphi_{\#}\big(\bb{\theta}\big):=\varphi_{\#}\big(\bb{\theta}_{K}\big)\).
\end{proposition}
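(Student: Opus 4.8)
The plan is to establish the parametric representation \eqref{eq:parametric-representation} by essentially following the strategy used by Ambrosio and Kirchheim for metric currents, reducing everything to the compatibility hypothesis on \(\bL\) and to the structure theory we already have for currents in \(\R^n\). First I would fix the bi-Lipschitz parametrization \((K_i, \varphi_i)\) of \(S\), with pairwise disjoint images, and restrict \(T\) to each piece \(S_i \coloneqq \varphi_i(K_i)\) via the restriction operator \(T \on S_i\) developed in Section~2. Since the images are disjoint and cover \(S\) up to an \(\Haus^n\)-null set on which \(\norm{T}_{\bL}\) vanishes (by rectifiability), the countable additivity of \(\norm{T}_{\bL}\) gives both that \(T = \sum_i (T\on S_i)\) and that \(\norm{T}_{\bL} = \sum_i \norm{T\on S_i}_{\bL}\), which is the additivity assertion in \eqref{eq:parametric-representation}. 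This reduces the problem to identifying each \(T\on S_i\) with a push-forward \(\varphi_{i\#}(\bb{\theta_i})\) for a suitable \(\theta_i \in L^1(\R^n)\) supported in \(K_i\).

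The core step is therefore to pull back a single piece. Here I would transfer \(T \on S_i\) to a current in \(K_i \subset \R^n\) by means of a bi-Lipschitz inverse \(\varphi_i^{-1}\colon S_i \to K_i\), using the push-forward construction and the closed-subset formalism \(T_A\) from Section~2 (so that \(\varphi_i^{-1}\) is genuinely defined on the support). Let \(S_i^\ast \coloneqq (\varphi_i^{-1})_\#(T\on S_i)\); this is a \((\varphi_i^{-1})_\#\bL\)-current concentrated on \(K_i\) whose mass measure is absolutely continuous with respect to \(\Haus^n\llcorner K_i\) because both the restriction and the push-forward under a bi-Lipschitz map preserve absolute continuity with respect to \(\Haus^n\). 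The key structural input is that any \(n\)-dimensional current in \(\R^n\) whose mass measure is absolutely continuous with respect to Lebesgue measure is of the form \(\bb{\theta}\) for some \(\theta\in L^1(\R^n)\): this is the representation established in \cite[Example~3.2]{ambrosio-2000} together with the density \(\theta\) coming from the Radon--Nikodym derivative. For \(\bL\)-currents I would need to check that this same representation applies, which it does, since for a fixed compatible \(\bL\) comparable to a genuine Lipschitz norm on \(\R^n\) the defining axioms of an \(\bL\)-current coincide up to constants with those of an ordinary current — the continuity and mass axioms only differ by the bounded factors relating \(\bL\) and \(\sup_{i,j}\Lip(\pi_i^{(j)})\), exactly as exploited in Propositions~\ref{prop:equivalent-to-ak} and \ref{prop:main}. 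Having obtained \(S_i^\ast = \bb{\theta_i}\), I would push forward again by \(\varphi_i\) and use the identity \((\varphi_i)_\#(\varphi_i^{-1})_\# = \mathrm{id}\) on currents concentrated in \(S_i\) to conclude \(T\on S_i = \varphi_{i\#}(\bb{\theta_i})\).

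The main obstacle I anticipate is not the decomposition but the verification that the push-forward and restriction operations interact correctly with the mass measure so that the second equality in \eqref{eq:parametric-representation} holds with genuine equality rather than merely an inequality. The functorality inequalities recorded at the end of Section~2 give \(\norm{\varphi_{i\#} T}_{\bL} \leq [\Lip(\varphi_i)]^n\, \varphi_{i\#}\norm{T}_{\bL}\), so one direction is immediate; the reverse requires a pointwise argument at \(\Haus^n\)-almost every point of \(K_i\), comparing the metric differential \(\md_p \varphi_i\) with the relevant Lipschitz norm and using Kirchheim's differentiability theorem to linearize \(\varphi_i\) at Lebesgue-density points. Concretely, I would show that at almost every \(p\in K_i\) the density of \(\norm{\varphi_{i\#}(\bb{\theta_i})}_{\bL}\) equals \(\abs{\theta_i(p)}\) times the Jacobian factor coming from \(\md_p\varphi_i\), and that the sum of these local contributions reconstitutes \(\norm{T}_{\bL}\) exactly because the pieces are disjoint. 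This local mass computation — matching the density of the pushed-forward current to the metric Jacobian — is precisely where the hypothesis that \(\bL\) is compatible does the real work, and where the careful bookkeeping will be required; I expect it to mirror \cite[Theorem~9.5]{ambrosio-2000} closely, with the Finsler Jacobian \(\J^\bullet\) replacing the mass\(\ast\) Jacobian in the eventual application to Theorem~\ref{thm:main-2}.
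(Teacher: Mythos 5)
Your overall architecture matches the paper's proof: decompose \(T\) along the disjoint pieces \(\varphi_i(K_i)\), pull each piece back to \(\R^n\), represent it as \(\bb{\theta_i}\), push forward, and sum the masses. However, there is a genuine gap at the core step. You justify the representation \(S_i^\ast=\bb{\theta_i}\) by asserting that ``for a fixed compatible \(\bL\) comparable to a genuine Lipschitz norm on \(\R^n\) the defining axioms of an \(\bL\)-current coincide up to constants with those of an ordinary current.'' Comparability with \(\bL^\infty\) is an \emph{additional} hypothesis that the proposition does not make: a compatible Lipschitz norm is only required to satisfy \(\bL(\beta\circ\pi)\leq\Lip(\beta)\cdot\bL(\pi)\), and nothing in that condition forces two-sided bounds \(c\,\bL^\infty\leq\bL\leq C\,\bL^\infty\). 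Propositions~\ref{prop:equivalent-to-ak} and~\ref{prop:main} only cover \(\bL^\infty\) and \(\bL^\sigma\), so you cannot invoke them here. This is precisely why the paper proves a separate structural result (Lemma~\ref{lem:main-technical-lemma}): for a general compatible \(\bL\), one first interprets the pulled-back functional as a local current in the sense of Lang --- the continuity needed for Lang's framework is extracted from compatibility via \(\bL_B(\pi_i\circ I)\leq C_B\cdot\bL_B(\id_B)\), and strict locality from the truncation argument of Section~2 --- and only then obtains \(\bb{\theta}\) via the \(C^{1,1}\) chain rule, the Riesz representation theorem, and Radon--Nikod\'ym. The fact that the pulled-back object is an ordinary metric current is a \emph{conclusion} of that lemma, not something you may assume. (Your route would be fine if the proposition were stated only for \(\bL^\sigma\), and indeed the paper remarks that the proof simplifies considerably in that case; but as stated the proposition is more general.)

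Two smaller points. First, the mass identity \(\norm{T}_{\bL}=\sum_i\norm{\varphi_{i\#}(\bb{\theta_i})}_{\bL}\) does not follow from ``countable additivity of \(\norm{T}_{\bL}\)'' alone, since the paper only establishes the one-sided restriction inequality \(\norm{T_A}_{\bL_A}\leq\norm{T}_{\bL}\on A\); the equality comes from combining \(\mu_i\leq\norm{T}_{\bL}\on\varphi_i(K_i)\) (so \(\sum_i\mu_i\leq\norm{T}_{\bL}\)) with the mass bound \(\abs{T(f,\pi)}\leq\sum_i\int\abs{f}\,d\mu_i\) and the minimality defining \(\norm{T}_{\bL}\). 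Second, the pointwise comparison of the density of \(\norm{\varphi_{i\#}(\bb{\theta_i})}_{\bL}\) with the metric Jacobian, which you identify as the place where compatibility ``does the real work,'' is not needed for this proposition at all --- that computation is the content of Theorem~\ref{thm:formula-for-the-mass-measure} and is carried out only for \(\bL^\sigma\); here compatibility is used instead to establish strict locality, well-definedness of \(T_A\) and of push-forwards, and the continuity hypothesis of Lang's theory.
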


Strictly speaking, for the proof of our main results from the introduction, we only need that \eqref{eq:parametric-representation} holds for \(\bL^\sigma\)-currents. And for these Lipschitz norms, the proof of the proposition could be simplified considerably. However, we believe that compatible Lipschitz norms could be quite useful in future investigations and therefore decided to formulate  Proposition~\ref{prop:parametric} in a more general way than would actually be necessary for the purpose of this paper. 

\begin{proof}[Proof of Proposition~\ref{prop:parametric}]
We set \(\mu=\norm{T}_{\bL}\) and \(A_i=\varphi_i(K_i)\). For every \(i\geq 1\), 
\begin{equation}\label{eq:representation-of-T}
T\big(f-(\mathbbm{1}_{A_1}f+\dotsm+\mathbbm{1}_{A_N}f), \pi\big)\leq C\cdot \mu\Big(S \setminus \bigcup_{i=1}^N A_i\Big),
\end{equation}
where \(C=\bL(\pi)^n \cdot \norm{f}_\infty\). By assumption, \(\mu\) vanishes on \(\Haus^n\)-negligible Borel sets. Since for every \(\Haus^n\)-measurable set \(A\subset X\), there is a Borel set \(B\subset X\) with \(\Haus^n(A)=\Haus^n(B)\), we may conclude that \(\mu\) also vanishes on every \(\Haus^n\)-measurable \(\Haus^n\)-negligible set. It follows that the left-hand side of \eqref{eq:representation-of-T} converges to zero as \(N\to \infty\) and so \(T\) admits the representation
\begin{equation}\label{eq:aux-representation}
T(f, \pi)=\lim_{N\to \infty} \sum_{i=1}^N T(\mathbbm{1}_{A_i} \cdot f, \pi).
\end{equation}
Let \(\psi_i\colon A_i\to \R^n\) denote the inverse of \(\varphi_i\) and set \(\bL_i:=\psi_{i\#}\bL_{A_i}\).
We now consider the \(\bL_i\)-current \(T_i:=\psi_{i\#} T_{A_i}\) and show that \(\norm{T_i}_{\bL_i}\) satisfies the assumptions of Lemma~\ref{lem:main-technical-lemma} below. To make the arguments more readable, from now on we omit the indices indicating the Lipschitz norm in measures of the form \(\norm{T}_{\bL}\). 

Since \(\norm{T_i} \leq \psi_{i\#} \norm{T_{A_i}}\), it follows that \(\norm{T_i}\) is concentrated on \(K_i\). Next, we show that \(\norm{T_i}\) vanishes on \(\Haus^n\)-negligible Borel sets. To this end, let \(A\subset \R^n\) be a Borel set. If \(\Haus^n(A)=0\), then \(\Haus^n(A\cap K_i)=0\) and since \(\varphi_i\) is Lipschitz, \(\Haus^n(\psi_i^{-1}(A\cap K_i))=0\) as well.  We know that \(\norm{T_{A_i}} \leq \norm{T}\on A_i\). Hence, by putting everything together, we find that 
\[
\norm{T_i}(A)=\norm{T_i}(A\cap K_i)\leq \norm{T_{A_i}}\big(\psi_i^{-1}(A\cap K_i)\big)\leq \norm{T}\big(\psi_i^{-1}(A\cap K_i)\big) \leq 0,
\]
where for the last inequality we used that \(T\) is rectifiable. Now,  Lemma~\ref{lem:main-technical-lemma} tells us that 
\[
\psi_{i\#} T_{A_i}=\bb{\theta_i}
\]
for some \(\theta_i\in L^1(\R^n)\). Since \(\norm{T_i}\) is concentrated on \(K_i\), it follows that \(\spt \theta_i\subset K_i\). Moreover, \(T_{A_i}=\big(\varphi_{i\#}\bb{\theta_i}\big)_{A_i}\) and so the left equality in \eqref{eq:parametric-representation} follows from \eqref{eq:aux-representation}.

To finish the proof, we now show the other equality in \eqref{eq:parametric-representation}. Letting \(\mu_i=\norm{T_i}_{\bL}\) for \(T_i=\varphi_{i\#}\big(\bb{\theta_i}\big)\), we get
\begin{equation}\label{eq:aux-2}
\abs{T(f, \pi)}\leq \sum_{i=1}^\infty \int_{X} \abs{f} \, d\mu_i
\end{equation}
for all \((f, \pi)\in \mathcal{D}^n(X)\) with \(\bL(\pi)\leq 1\). Because of \(\mu_i \leq \norm{T}_{\bL} \on \varphi_i(K_i)\), 
the infinite series
\[
\tilde{\mu}(A):=\sum_{i=1}^\infty \mu_i(A)
\]
converges for every Borel set \(A\subset X\). This shows that \(\tilde{\mu}\) defines a finite Borel measure on \(X\) such that \(\tilde{\mu}\leq \norm{T}_{\bL}\). Because of \eqref{eq:aux-2}, it follows that \(\tilde{\mu}=\norm{T}_{\bL}\), as desired.
\end{proof}

Thus, to finish the proof of Proposition~\ref{prop:parametric}, we need to establish the following lemma. 

\begin{lemma}\label{lem:main-technical-lemma}
Let \(\bL\) be a compatible Lipschitz norm on \(\R^n\). Suppose \(T\) is an \(n\)-dimensional \(\bL\)-current in \(\R^n\) with the following properties:
\begin{enumerate}
    \item \(\norm{T}_{\bL}\) vanishes on \(\Haus^n\)-negligible Borel sets,
    \item\label{it:concentrated} and \(\norm{T}_{\bL}\)  is concentrated on a compact set.
\end{enumerate}
Then there exists \(\theta\in L^1(\R^n)\) such that \(T=\bb{\theta}\), that is,
\begin{equation}\label{eq:rep-formula}
T(f, \pi)=\int_{\R^n} \theta \cdot f\cdot \det(D\pi) \,\, d\mathscr{L}^n
\end{equation}
for all \((f, \pi)\in \mathcal{D}^n(\R^n)\). In particular, \(T\) is an \(n\)-current.
\end{lemma}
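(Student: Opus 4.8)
The plan is to reduce everything to the single test map $\pi=\id$, invoke the Radon--Nikodym theorem there, and then propagate the resulting formula to arbitrary Lipschitz $\pi$ by means of the differential calculus for currents.

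First I would record the absolute continuity of the mass measure. Since $\Haus^n$ and $\Leb^n$ coincide on $\R^n$, hypothesis (1) says exactly that $\mu:=\norm{T}_{\bL}$ vanishes on $\Leb^n$-null Borel sets; being finite, $\mu=g\,\Leb^n$ for some $g\in L^1(\R^n)$ with $g\geq 0$, and by hypothesis (2) we may take $g$ supported in a compact set. Using the extension of $T$ to bounded Borel functions in the first slot established in Section~2, the functional $f\mapsto T(f,\id)$ satisfies $\abs{T(f,\id)}\leq \bL(\id)^n\int\abs{f}\,g\,d\Leb^n$, so Radon--Nikodym produces $\theta\in L^1(\R^n)$, with $\abs{\theta}\leq \bL(\id)^n g$ almost everywhere and compact support, such that
\[
T(f,\id)=\int_{\R^n}\theta\,f\,d\Leb^n \qquad\text{for every bounded Borel }f.
\]
This $\theta$ is the candidate density.

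The observation that makes the continuity axiom usable is that compatibility, applied with the inner map equal to the identity, yields $\bL(\pi)=\bL(\pi\circ\id)\leq \Lip(\pi)\,\bL(\id)$ for every $\pi\in\Lip(\R^n,\R^n)$. Thus along any sequence with bounded Lipschitz constants the quantities $\bL(\pi_k)$ stay bounded, so axiom~\ref{it:one} gives ordinary Lipschitz-type continuity. With this in hand I would transfer the Ambrosio--Kirchheim calculus to $T$: relying only on multilinearity, the strict locality property of Section~2, and continuity, one obtains the alternating property together with the Leibniz rule
\[
T(f,\psi)=T\big(f\cdot\det(D\psi),\,\id\big)\qquad\text{for }\psi\in C^1(\R^n,\R^n),
\]
where the scalar chain rule is proved by approximating the components of $\psi$ by piecewise affine functions and the surviving permutations reassemble into the Jacobian determinant. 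Combined with the previous step (and the bounded-Borel representation of $T(\cdot,\id)$) this gives $T(f,\psi)=\int\theta f\det(D\psi)\,d\Leb^n=\bb{\theta}(f,\psi)$ for all $C^1$ maps $\psi$.

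Finally I would remove the smoothness assumption. Given $\pi\in\Lip(\R^n,\R^n)$, mollification yields smooth $\pi_\varepsilon\to\pi$ pointwise with $\Lip(\pi_\varepsilon)\leq\Lip(\pi)$, hence $\bL(\pi_\varepsilon)\leq \bL(\id)\,\Lip(\pi)$ is uniformly bounded. Then $T(f,\pi)=\lim_\varepsilon T(f,\pi_\varepsilon)$ by axiom~\ref{it:one}, $T(f,\pi_\varepsilon)=\bb{\theta}(f,\pi_\varepsilon)$ by the previous paragraph, and $\bb{\theta}(f,\pi_\varepsilon)\to\bb{\theta}(f,\pi)$ by the weak-$\ast$ continuity of $u\mapsto\int\theta f\det(Du)\,d\Leb^n$ recalled at the start of this section. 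Hence $T=\bb{\theta}$, which is an $n$-current. I expect the main obstacle to be the careful transfer of the chain rule and the alternating property from the metric-current setting to $\bL$-currents; the point just noted, that compatibility forces $\bL\leq\bL(\id)\,\Lip$ on $\R^n$, is what makes the continuity axiom strong enough for the Ambrosio--Kirchheim arguments to run, after which everything else (Radon--Nikodym, mollification, the determinant bookkeeping) is routine.
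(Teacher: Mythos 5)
Your proposal is correct and follows essentially the same route as the paper: both reduce to the test map \(\id_{\R^n}\) via the chain rule \(T(f,\psi)=T(f\cdot\det(D\psi),\id)\), represent \(f\mapsto T(f,\id)\) by a measure that is absolutely continuous with respect to \(\Leb^n\) thanks to hypothesis (1), and pass to general Lipschitz \(\pi\) by smoothing together with the weak-\(\ast\) continuity of \(u\mapsto\int\theta f\det(Du)\,d\Leb^n\). The only difference is one of packaging: you correctly identify that compatibility forces \(\bL(\pi)\le\Lip(\pi)\,\bL(\id)\) so that the Ambrosio--Kirchheim calculus can be rerun by hand, whereas the paper verifies that \(T\), restricted to a ball containing the support and pulled back, is a local current in the sense of Lang and then cites Lang's Theorem~2.5 for the same chain-rule identity.
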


\begin{proof}
To begin, we show that \(T\) can be interpreted as a local current in the sense of Lang \cite[Definition~2.1]{lang-2011}. 
We may suppose that \(\norm{T}_{\bL}\) is concentrated on the Euclidean \(n\)-ball \(B\subset \R^n\). Let \(I\colon B \to \R^n\) denote the inclusion map. Consider a Lipschitz map \(f\in \Lip(\R^n, \R)\) with compact support and suppose \(\pi=(\pi^{(1)}, \ldots, \pi^{(n)})\) with \(\pi^{(i)}\in \Lip_{\loc}(\R^n, \R)\). Then \((f\circ I, \pi \circ I)\in \mathcal{D}^n(B)\)
and so 
\[
T^\ast(f, \pi):=T_B(f\circ I, \pi \circ I)
\]
is a well-defined multilinear functional on \(\Lip_c(\R^n, \R)\times \bigl[\Lip_{\loc}(\R^n, \R)\bigr]^n\). For brevity, we denote the latter set by \(\mathcal{D}_{\loc}^n(
\R^n)\). 

Let \(f_i\in \Lip_c(\R^n, \R)\) be compactly supported  Lipschitz functions such that \(\sup_i \Lip(f_i) \leq C\) for some uniform constant and there is a compact subset \(K\subset \R^n\) with \(\spt(f_i)\subset K\). Further, suppose \(\pi_i\in \bigl[\Lip_{\loc}(\R^n, \R)\bigr]^n\) are such that for every compact \(K\subset \R^n\) there is \(C_K>0\) so that \(\sup_i \Lip(\pi_i|_{K}) \leq C_K\).  We claim that if we have pointwise convergences \(f_i \to f\) and \(\pi_i \to \pi\) with \((f, \pi)\in \mathcal{D}_{\loc}^n(\R^n)\), then \(
T^\ast(f, \pi)=\lim_{i\to \infty} T^\ast(f_i, \pi_i).
\)
Clearly \(\bL_B(\pi_i \circ I)\leq C_B \cdot \bL_B(\id_B)\) and  thus, letting \(C=C_B \cdot \bL_B(\id_B)\), we infer
\[
\abs{T^\ast(f-f_i, \pi_i)}\leq C \int_{B} \abs{f-f_i} \, d\norm{T_B}_{\bL} \leq C' \cdot \norm{(f-f_i) \circ I}_{\infty},
\]
where \(C'= C^n\cdot \mass_{\bL}(T_B)\). This implies that
\begin{align*}
\lim_{i\to \infty} T^\ast(f_i, \pi_i)&=\lim_{i\to \infty} T^\ast(f, \pi_i) =\lim_{i\to \infty} T_B(f\circ I, \pi_i\circ I)\\
&=T_B(f\circ I, \pi\circ I)=T^\ast(f, \pi),
\end{align*}
which proves the continuity of \(T^\ast\). For the locality property, suppose some \(\pi^{(i)}\) is constant on an open neighborhood of \(\spt(f)\). Since \(\spt(f\circ I)\subset \spt(f) \cap B\), the composition \(\pi^{(i)}\circ I\) is constant on \(\spt(f\circ I)\). Hence, we have 
\[
T^\ast(f, \pi)=T_B(f\circ I, \pi\circ I)=0,
\]
as \(T_B\) satisfies the strict locality property.

From the above, we conclude that \(T^\ast\) is a local \(n\)-current in \(\R^n\). In particular, if \(\psi=(\psi^{(1)}, \ldots, \psi^{(n)})\in \big[C^{1,1}(\R^n)\big]^n\), then
\[
T^\ast(f, \psi)=T^\ast(f\cdot \det D\psi, \id_{\R^n})
\]
for all \(f\in \Lip_c(\R^n, \R)\); see \cite[Theorem~2.5]{lang-2011}. Here, we follow Lang and use \(C^{1,1}(\R^n)\) to denote the set of all \(\psi\in C^1(\R^n)\) with partial derivatives \(D_1 \psi, \ldots, D_n \psi \in \Lip_{\loc}(\R^n)\). We now consider the linear functional \(\Lambda \colon C_c(\R^n) \to \R\) defined by
\[
\Lambda(f)=T_B(f\circ I, \id_B).
\]
Because of \((f\circ I)\in \mathcal{B}_\infty(B)\),  this is well-defined. Moreover, since \(T_B\) satisfies the finite mass axiom, 
\[
\abs{\Lambda(f)}\leq \Big[\bL_B(\id_B)^n \mass(T_B)\Big]\cdot \norm{f}_\infty
\]
for all \(f\in C_c(\R^n)\).
By the Riesz representation theorem, there exists a signed Borel measure \(\eta\) on \(\R^n\) such that 
\[
\Lambda(f)=\int_{\R^n} f \, d\eta.
\] 
Thus, for all \((f, \pi)\in \mathcal{D}^n(\R^n)\) with \(\spt(f)\) compact and \(\pi\in \big[C^{1,1}(\R^n)\big]^n\), it holds
\begin{equation}\label{eq:very-important-equality}
T(f, \pi)=T^\ast(f, \pi)=T^\ast(f \cdot \det D\pi, \id_{\R^n})=\int_{\R^n} f \cdot \det(D\pi) \, d\eta.
\end{equation}
Consequently, by a standard approximation argument, it follows that \eqref{eq:very-important-equality} is valid for all \((f, \pi)\in \mathcal{D}^n(\R^n)\). Notice that if \(Q=I_1 \times \dotsm \times I_n\) is a product of intervals \(I_i\subset \R\), then there exist a sequence \((f_i)\) of Lipschitz functions \(\R^n \to \R\) such that \(\abs{f_i} \leq 1\) and \(f_i \to \mathbbm{1}_Q\) pointwise. In particular, using \eqref{eq:very-important-equality}, we find that
\[
T(\mathbbm{1}_Q, \id_{\R^n})=\lim_{i\to \infty} T(f_i, \id_{\R^n})=\lim_{i\to \infty} \int_{\R^n} f_i \, d\eta=\eta(Q)
\]
by the dominated convergence theorem. This implies that \(\eta(A)=T(\mathbbm{1}_A, \id_{\R^n})\) for every Borel set \(A\subset \R^n\). Clearly, \(\eta(A)\leq \bL(\id_{\R^n})^n \int_A 1 \, d\norm{T}_{\bL}\), and so by our assumption (\ref{it:concentrated}) on \(\norm{T}_{\bL}\), it follows that \(\eta\) is absolutely continuous with respect to the Lebesgue measure on \(\R^n\). An application of the Radon-Nikodým theorem now yields a function \(\theta\in L^1(\R^n)\) such that \(\int_{\R^n} g\, d\eta=\int_{\R^n} \theta \cdot g \, d\mathscr{L}^n\) for every \(\abs{\eta}\)-integrable function \(g\colon \R^n \to \R\). Therefore, \eqref{eq:rep-formula} follows from \eqref{eq:very-important-equality}.
\end{proof}

\section{Jacobians and volumes}\label{sec:four}

To give a precise description of the measures appearing in the parametric representation \eqref{eq:parametric-representation}, it is necessary to introduce tools from Finsler geometry. 

\subsection{Jacobians}
A map \(\J\colon \Sigma \to \R_{\geq 0}\) defined on the set \(\Sigma\) of all semi-norms on \(\R^n\) is called a \textit{normalized Jacobian} if the following holds: 
\begin{enumerate}
    \item\label{it:jac-one} \(\J(\abs{\,\cdot\,})=1\) for the standard Euclidean norm \(\abs{\,\cdot\,}\),
    \item\label{it:jac-two} \(\J(s)\leq \J(s')\) whenever \(s \leq s'\),
    \item\label{it:jac-three} if \(s=s' \circ L\) for a bijective linear map \(L\colon \R^n \to \R^n\), then \(\J(s)=\J(s')\cdot \abs{\det(L)}\). 
\end{enumerate}
For our purposes it is often more natural not to impose condition \eqref{it:jac-one}. Thus, any map \(\J\colon \Sigma \to \R_{\geq 0}\) satisfying \eqref{it:jac-two} and \eqref{it:jac-three} will be called \textit{Jacobian} in this work. For example, if \(\sigma\) is a fixed norm on \(\R^n\) with unit ball \(B_\sigma\), then the following map defines a Jacobian. For every norm \(s\in \Sigma\) we set
\begin{equation}\label{eq:definition-jacobian}
\J^{\sigma}(s)=\sup_{F} \frac{\Leb^n(B_\sigma)}{\mathscr{L}^n(F^{-1}(B_\sigma))},
\end{equation}
where the supremum is taken over the set \(\mathcal{F}\) of all linear maps \(F\colon \R^n \to \R^n\) such that \(F^{-1}(B_\sigma)\) contains the unit ball of \(s\). For any other \(s\in \Sigma\) which is not a norm, we define \(\J^\sigma(s)=0\). A straightforward computation shows that \(\J^\sigma(\abs{\,\cdot\,})=\J^{\ir}(\sigma)\). In particular, \(\J^\sigma\) is a normalized Jacobian if \(\J^{\ir}(\sigma)=1\). Here, \(\J^{\ir}\) denotes Ivanov's inscribed Riemannian Jacobian.

The following lemma shows that the supremum in \eqref{eq:definition-jacobian} is attained. However, this is not too important for the main goal of this paper. We mainly included it to make some of the proofs in Section~\ref{sec:section-5} a little less technical.

\begin{lemma}\label{lem:not-important}
There exists \(F\colon \R^n \to \R^n\) such that the supremum in  \eqref{eq:definition-jacobian} is achieved.  
\end{lemma}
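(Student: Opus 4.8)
The plan is to reformulate the constraint defining $\mathcal{F}$ as an operator-norm bound, reduce the objective to the determinant, and then apply a compactness argument. Writing $B_s$ for the unit ball of $s$, I would first note that the condition $B_s \subseteq F^{-1}(B_\sigma)$ is equivalent, by $1$-homogeneity of the norms, to the pointwise estimate $\sigma(Fx) \le s(x)$ for all $x \in \R^n$. In other words, $\mathcal{F}$ is exactly the set of linear maps $F$ whose operator norm from $(\R^n, s)$ to $(\R^n, \sigma)$ is at most one.

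Next I would simplify the quantity being maximized. For invertible $F$ the change of variables formula gives $\Leb^n(F^{-1}(B_\sigma)) = \abs{\det F}^{-1}\Leb^n(B_\sigma)$, so the ratio in \eqref{eq:definition-jacobian} equals $\abs{\det F}$. For a non-invertible $F \in \mathcal{F}$, the preimage $F^{-1}(B_\sigma)$ contains $\ker F$ and hence has infinite Lebesgue measure, so the ratio is $0 = \abs{\det F}$. Thus in every case $\J^\sigma(s) = \sup_{F \in \mathcal{F}} \abs{\det F}$, and it remains only to show that this supremum is attained.

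The key step is to verify that $\mathcal{F}$ is a nonempty compact subset of the space of linear maps. Nonemptiness follows from the equivalence of the norms $s$ and $\sigma$: there is $\lambda > 0$ with $\lambda\sigma \le s$, whence $\lambda\,\id \in \mathcal{F}$. Closedness is immediate, since for each fixed $x$ the set $\{F : \sigma(Fx) \le s(x)\}$ is closed and $\mathcal{F}$ is their intersection. For boundedness I would use that $B_s$ contains a Euclidean ball $rB$ while $B_\sigma$ is contained in a Euclidean ball $RB$; the inclusion $F(B_s) \subseteq B_\sigma$ then forces the Euclidean operator norm of $F$ to be at most $R/r$. By Heine--Borel, $\mathcal{F}$ is compact. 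Since $F \mapsto \abs{\det F}$ is continuous, it attains its maximum on $\mathcal{F}$ at some $F_0$; because the supremum is strictly positive (it is at least $\lambda^n$), this maximizer is automatically invertible. The only point requiring care is the treatment of non-invertible competitors, which the reformulation handles transparently, so I expect no genuine obstacle beyond bookkeeping.
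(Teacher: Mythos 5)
Your proof is correct and follows the same basic strategy as the paper's, namely the direct method: exhibit compactness in the optimization problem defining \(\J^\sigma\) and pass to a limit. Your execution is somewhat cleaner in two respects: you reformulate the objective as \(\abs{\det F}\) (so that continuity is immediate, rather than arguing via the area formula and the convergence \(\Jac(F_i^{-1})\to\Jac(F^{-1})\) as the paper does), and you prove that the entire constraint set \(\mathcal{F}\) is compact by bounding \(F\) directly from the inclusion \(F(B_s)\subseteq B_\sigma\), whereas the paper only establishes a uniform bound on the sets \(F_i^{-1}(B_\sigma)\) along a maximizing sequence via a cone-volume estimate; your observation that the supremum is at least \(\lambda^n>0\) also disposes of non-invertible competitors transparently.
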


\begin{proof}
Since \(s\) is a norm on \(\R^n\) there exists \(\epsilon>0\) such that the closed Euclidean ball \(B(0, \epsilon)\) is contained in \(B_s\). Let \(F\in \mathcal{F}\). If \(F^{-1}(B_\sigma)\) is not contained in \(B(0, R)\), then
\[
\Leb^n(F^{-1}(B_\sigma)) \geq \Big( \frac{\epsilon}{2}\Big)^{n-1} \cdot R.
\]
This implies that there are \(R_0>0\) and a minimizing sequence \((F_i)\subset \mathcal{F}\) such that \(F_i^{-1}(B_\sigma) \subset B(0, R_0)\) for every \(i\geq 1\). In particular, each \(F_i\) is bijective and \(F_i^{-1}\circ I\) is \(R_0\)-Lipschitz, where \(I\colon (\R^n , \sigma) \to (\R^n, \abs{\cdot})\) denotes the identity map. By a standard compactness argument, there exists a subsequence, also denoted by \((F_i)\), that converges to some bijective \(F\in \mathcal{F}\) such that \(\Jac(F_i^{-1}) \to \Jac(F^{-1})\) as \(i\to \infty\). Hence, it follows from the area formula that \(\Leb^n(F^{-1}(B_\sigma))=\lim_{i\to \infty} \Leb^n(F^{-1}_i(B_\sigma))\). In particular, the supremum in \eqref{eq:definition-jacobian} is achieved by \(F\).
\end{proof}

Other important examples are the Busemann and Holmes-Thompson Jacobians, denoted by \(\J^{\bus}\) and \(\J^{\htt}\), respectively. We refer to \cite[Section~3.1]{thompson-2004} for their definitions and further information. 

Most importantly, Jacobians can be used to associate Finsler volumes to \(n\)-rectifiable metric spaces. To make this precise, we recall in the following some basic notions from metric analysis.

\subsection{Metric differentials and related notions}
It is a well-known consequence of Rademacher's theorem that any Lipschitz map \(\varphi\colon A \to \R^m\), where \(A\subset \R^n\) a Borel subset, is differentiable almost everywhere. This means that for almost every \(p\in A\) there exists a linear map \(D_p \varphi\colon \R^n \to \R^m\) such that
\[
\lim_{q\in A, \,q\to p}\frac{\varphi(q)-\varphi(p)-(D_p \varphi)(q-p)}{\abs{q-p}}=0.
\]
If no such map exists we use the convention that \(D_p \varphi\) denotes the zero map. In \cite{kirchheim-1994}, Kirchheim generalized Rademacher's theorem to Lipschitz maps \(\varphi\colon A \to X\).
He showed that for almost every \(p\in A\) there exists a seminorm \(\md_p \varphi\) on \(\R^n\) such that
\[
\lim_{q\in A, \,q\to p} \frac{d(\varphi(p), \varphi(q))-(\md_p \varphi)(q-p)}{\abs{q-p}}=0.
\]
Suppose now that \(X\subset \ell_\infty\), which is always possible if \(X\) is separable. For each \(j\in \N\) let \(\text{proj}_j \colon \ell_\infty \to \R\) denote the projection onto the \(j\)-th coordinate. It can be shown that if \(f\colon A\to X\) is Lipschitz, then for almost every \(p\in A\), there exists a linear map \(\weakd_p \varphi\colon \R^n \to \ell_\infty\) such that
\begin{equation}\label{eq:relation-weak-differential-and-metric-diff}
(\md_p\varphi)(\cdot)=\norm{(\weakd_p\varphi)(\cdot)}_\infty.
\end{equation}
and 
\[
D_p\big( \text{proj}_j \circ \varphi\big)=\text{proj}_j \circ \weakd_p\varphi
\]
for every \(j\in \N\). See, for example, \cite{ambrosio--rectifiable-2000, kirchheim-1994, ivanov-2009, korevaar-1993} for a proof of this. The map \(\weakd_p \varphi\) is called the \(\text{w}^\ast\)-differential of \(\varphi\) at \(p\). 

An important application of metric differentials is the following metric area formula due to Kirchheim \cite{kirchheim-1994},
\begin{equation}\label{eq:area-formula}
\int_A \theta\big(\varphi(p)\big) \, \J^{\bus}(\md_p \varphi) \, dp=\int_X \theta(x)\,\, \#\big(A\cap \varphi^{-1}(x)\big) \, d\hspace{-0.2em}\Haus^n(x)
\end{equation}
for any Borel function \(\theta \colon X \to \R\). Here, \(\J^{\bus}\) denotes the Busemann Jacobian. If \(F\colon V \to W\) is a linear map between Banach spaces so that \(\dim V=n\), then for any Borel set \(A\subset V\) with \(\Haus^n(A)>0\), 
\begin{equation}\label{eq:def-jacobian}
\Jac(F)=\frac{\Haus^n(F(A))}{\Haus^n(A)},
\end{equation}
is a well-defined real-number called the \textit{Jacobian} of \(F\). Due to \eqref{eq:relation-weak-differential-and-metric-diff}, it follows that
\[
\J^{\bus}(\md_p \varphi)=\Jac(\weakd_p\varphi),
\]
for \(\Haus^n\)-almost every \(p\in A\). In particular, if \(X=\R^n\), then \(\weakd_p \varphi=D_p \, \varphi\) and \(\Jac( D_p \,\varphi)=\abs{\det D_p \,\varphi}\) almost everywhere, and thus \eqref{eq:area-formula} reduces to the classical area formula in this case.

\subsection{Finsler volumes}\label{sec:finsler-vols} Let \(\J^\bullet\) be a Jacobian and \(S\subset X\) an \(n\)-rectifiable subset of a complete metric space. Suppose that \((K_i, \varphi_i)\) is a bi-Lipschitz parametrization of \(S\). Then
\[
\mu_S^\bullet(A)=\sum_{i=1}^\infty \int_{\varphi_i^{-1}(A)} \J^\bullet(\md_p \varphi_i)\, dp
\]
defines a Borel measure on \(X\). Using the chain rule for metric differentials and \eqref{it:jac-three} in the definition of Jacobians, it follows that \(\mu_S^\bullet\) does not depend on the choice of bi-Lipschitz parametrization \((K_i, \varphi_i)\). The measure \(\mu_S^\bullet\) is called \textit{Finsler volume} associated to \(S\). Clearly, every \(n\)-dimensional Banach space \(V\) is \(n\)-rectifiable. It follows directly from the construction that \(\mu_V^\bullet\) is a Haar measure on \(V\). An assignment \(V \mapsto \vol_V\) which assigns to each \(n\)-dimensional normed space \(V\) a Haar measure \(\vol_V\) on \(V\) such that
\begin{itemize}
    \item[(*)] If \(F\colon V \to W\) is linear and \(\norm{F} \leq 1\), then \(\vol_W(F(A))\leq \vol_V(A)\) for all Borel sets \(A\subset V\). In other words, any such map \(F\) is volume non-increasing.
\end{itemize}
is called \textit{n-dimensional Banach volume functional}. It is easy to check that the assignment \(V \mapsto \mu_V^\bullet\) satisfies this definition. Conversely, if \( V \mapsto \vol_V\) is a Banach volume functional, then one can define a Jacobian \(\J\) as follows.
If \(s\in \Sigma\) is not a norm, we set \(\J(s)=0\), and
\[
\J(s)=\frac{\vol_{(\R^n, s)}(A)}{\Leb^n(A)}, \quad \quad \text{where \(A\subset \R^n\) is a Borel subset with \(\Leb^n(A)>0\)},
\]
otherwise. Using \cite[Lemma~4]{kirchheim-1994}, it is not difficult to show that the functors defined above establish a one-to-one correspondence between Banach volume functionals and Jacobians.

\section{Volumes of rectifiable \(\bL^{\sigma}\)-currents}\label{sec:section-5}

The aim of this section is to prove the following explicit description of the mass measure of \(\bL^\sigma\)-currents. 

\begin{theorem}\label{thm:formula-for-the-mass-measure}
Suppose \(T\) is a rectifiable \(\bL^\sigma\)-current which is concentrated on the \(n\)-rectifiable set \(S\subset X\). Let \((K_i, \varphi_i)\) be a bi-Lipschitz parametrization of \(S\) and let \(\theta_i\in L^1(\R^n)\) with \(\spt \theta_i \subset K_i\) be such that the parametric representation \eqref{eq:parametric-representation} holds. Then
\[
\norm{T}_{\bL^\sigma}=\abs{\theta}\, d\mu^\sigma_S,
\]
where \(\theta\colon X \to \R\) is defined by \(\theta(x)=\theta_i(p)\) if \(x=\varphi_i(p)\) and \(\theta(x)=0\) otherwise.
\end{theorem}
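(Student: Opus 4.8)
The plan is to reduce the statement to its local building blocks using the parametric representation from Proposition~\ref{prop:parametric}, and then compute the mass of a single pushed-forward current $\varphi_\#\bb{\theta}$ explicitly. By Proposition~\ref{prop:parametric} we have $T=\sum_i \varphi_{i\#}\bb{\theta_i}$ together with the crucial additivity $\norm{T}_{\bL^\sigma}=\sum_i \norm{\varphi_{i\#}\bb{\theta_i}}_{\bL^\sigma}$. Since the images $\varphi_i(K_i)$ are pairwise disjoint and $\norm{\varphi_{i\#}\bb{\theta_i}}_{\bL^\sigma}$ is concentrated on $\varphi_i(K_i)$, it suffices to prove the pointwise identity $\norm{\varphi_{i\#}\bb{\theta_i}}_{\bL^\sigma} = \abs{\theta}\,\mu^\sigma_S$ on each piece $\varphi_i(K_i)$ separately. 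Comparing with the definition $\mu^\sigma_S(A)=\sum_i\int_{\varphi_i^{-1}(A)}\J^\sigma(\md_p\varphi_i)\,dp$, the entire theorem therefore follows once I establish the single-chart formula
\[
\norm{\varphi_{\#}\bb{\theta}}_{\bL^\sigma}(A)=\int_{\varphi^{-1}(A)}\abs{\theta(p)}\,\J^\sigma(\md_p\varphi)\,dp
\]
for a bi-Lipschitz embedding $\varphi\colon K\to X$ and $\theta\in L^1(\R^n)$ with $\spt\theta\subset K$.

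To prove this single-chart formula, first I would compute $\norm{\varphi_\#\bb{\theta}}_{\bL^\sigma}$ in terms of the mass of $\bb{\theta}$ itself by pulling everything back through $\varphi$. Writing $\psi=\varphi^{-1}$ on $\varphi(K)$, the pushforward behaves well under the Lipschitz-norm transformation rules recorded in Section~2, so the problem localizes to understanding the $\bL^\sigma$-mass density of the flat current $\bb{\theta}$ in $\R^n$ \emph{measured against the pulled-back Lipschitz norm}. The key point is that pushing forward by $\varphi$ replaces the ambient metric derivative by $\md_p\varphi$, and the density of $\norm{\bb{\theta}}$ relative to $\bL^\sigma$ at a point $p$ should be exactly $\abs{\theta(p)}$ times the Jacobian factor $\J^\sigma(\md_p\varphi)$. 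I expect the cleanest route is to first handle the model case $\varphi=\id$ on $\R^n$, where one shows directly from the representation $\bb{\theta}(f,\pi)=\int\theta f\det(D\pi)\,d\Leb^n$ that $\norm{\bb{\theta}}_{\bL^\sigma}=\abs{\theta}\,\J^\sigma(\abs{\,\cdot\,}_2)\,\Leb^n$, and that the optimal $\pi$ in the mass inequality~\eqref{eq:mass-estimate} is the linear map realizing the supremum in~\eqref{eq:definition-jacobian}; here Lemma~\ref{lem:not-important} guarantees such an optimal $F$ exists, which is why that lemma was included.

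The main obstacle, and the technical heart of the argument, is to pass from this constant-coefficient Euclidean model to the genuinely metric setting where $\varphi$ is only bi-Lipschitz and $\md_p\varphi$ is a $p$-dependent seminorm on $\R^n$. The plan is to blow up at a point of approximate differentiability: for $\Haus^n$-almost every $p$, the metric differential $\md_p\varphi$ exists and $\varphi$ is infinitesimally modeled by the linear map $\weakd_p\varphi$ into $\ell_\infty$. One then compares the local behaviour of $\norm{\varphi_\#\bb{\theta}}_{\bL^\sigma}$ near $\varphi(p)$ with the constant-seminorm model current associated to $\md_p\varphi$, using a Lebesgue-differentiation / density argument to extract the pointwise Radon--Nikodym derivative. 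The delicate steps are controlling the two-sided behaviour of the $\bL^\sigma$-mass under this blow-up (both the upper bound from axiom~\ref{it:three} and the matching lower bound, exploiting that $\bL^\sigma$ is compatible so the pushforward mass estimates of Section~2 apply), and verifying that the resulting density agrees $\Leb^n$-almost everywhere with $\abs{\theta}\,\J^\sigma(\md_p\varphi)$. Once the density is identified almost everywhere, integrating over $\varphi^{-1}(A)$ and using the area-formula bookkeeping for metric differentials yields the single-chart identity, and summing over $i$ completes the proof.
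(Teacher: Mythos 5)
Your overall architecture matches the paper's: reduce via the parametric representation and the mass additivity in \eqref{eq:parametric-representation} to a single chart, compute the mass exactly for the constant-coefficient model by exhibiting the optimal linear \(\pi^*\) coming from Lemma~\ref{lem:not-important} (this is precisely the content of Lemmas~\ref{lem:special-case} and~\ref{lem:desired-inequality}), and then approximate a general bi-Lipschitz chart by its linear model. The genuine divergence is in the last step. You propose a blow-up/Lebesgue-differentiation argument at points of metric differentiability; the paper (following Z\"ust) instead invokes Kirchheim's Lemma~4 to decompose \(K_i\), up to an \(\Haus^n\)-null set, into compact pieces on which \(\varphi_i\) factors as an \(L\)-bi-Lipschitz map composed with the identity \(\R^n\to V_j\) into a normed space, applies the exact model computation on each piece, and lets \(L\to1\), the error being a clean global multiplicative factor \(L^{2n}\). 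The paper's route buys two things that your sketch leaves open. First, it discharges wholesale what you correctly flag as the ``delicate step'': the matching \emph{lower} bound for the mass density, which requires transplanting the optimal linear \(\pi^*\) for the seminorm \(\md_p\varphi_i\) into \(X\) with controlled error --- exactly what the \(L\)-bi-Lipschitz factorization provides, and what a pointwise differentiation argument would have to reconstruct by hand. Second, it makes visible that the model case must be proved for an \emph{arbitrary} norm on \(\R^n\) (Lemma~\ref{lem:special-case} with general \(V=(\R^n,\norm{\cdot})\)), since the infinitesimal model of a chart is the seminorm \(\md_p\varphi_i\) rather than the Euclidean norm with which you begin; you do acknowledge this later, but your stated model computation \(\norm{\bb{\theta}}_{\bL^\sigma}=\abs{\theta}\,\J^\sigma(\abs{\,\cdot\,}_2)\,\Leb^n\) is only the Euclidean instance and would not suffice on its own. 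As written, your plan is a viable alternative in outline, but its technical heart is announced rather than executed; replacing the blow-up by Kirchheim's decomposition is the standard and shorter way to close it.
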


Recall that the Lipschitz norm \(\bL^{\sigma}\) is defined by \eqref{eq:main-example-Lip-norm} for some fixed norm \(\sigma\) on \(\R^n\). Clearly, Theorem~\ref{thm:main-2} from the introduction follows directly from the theorem above. 

\subsection{Special case} We will prove Theorem~\ref{thm:formula-for-the-mass-measure} by reducing it to the special case \(T=\bb{\theta}\). To simplify the notation we will use the notation \(\bL=\bL^{\sigma}\) throughout this subsection.

\begin{lemma}\label{lem:special-case}
Let \(V=(\R^n, \norm{\cdot})\) be a normed space and denote by \(I\colon \R^n \to V\) the identity map. Then for every \(\theta\in L^1(\R^n)\), 
\[
\norm{T}_{\bL}=\abs{\theta} \, \mu^{\sigma}_V,
\]
where \(T=I_\#\big(\bb{\theta}\big)\).
\end{lemma}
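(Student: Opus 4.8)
The plan is to reduce the statement to an explicit pointwise computation of determinants. First I would unwind the definitions. Since \(I\colon \R^n \to V\) is the identity on the underlying set, for \((f,\pi)\in\mathcal{D}^n(V)\) we have
\[
T(f,\pi)=\bb{\theta}(f\circ I,\pi\circ I)=\int_{\R^n}\theta\cdot f\cdot\det(D\pi)\,d\mathscr{L}^n,
\]
because \(D(\pi\circ I)=D\pi\). Next I would compute \(\mu^\sigma_V\) using the bi-Lipschitz parametrization given by \(I\) itself: the metric differential \(\md_p I\) equals the norm \(\norm{\cdot}\) of \(V\) at every \(p\), so \(\J^\sigma(\md_p I)=\J^\sigma(\norm{\cdot})\) is constant and \(\mu^\sigma_V=\J^\sigma(\norm{\cdot})\,\mathscr{L}^n\). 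Finally, writing \(B\) for the unit ball of \(V\) and \(\mathcal{F}\) for the set of linear maps \(F\colon\R^n\to\R^n\) with \(F(B)\subseteq B_\sigma\), I would record the reformulation
\[
\J^\sigma(\norm{\cdot})=\sup\bigl\{\,\abs{\det F}\ :\ F\in\mathcal{F}\,\bigr\},
\]
which follows from \(\mathscr{L}^n(F^{-1}(B_\sigma))=\abs{\det F}^{-1}\mathscr{L}^n(B_\sigma)\) together with the equivalence of the constraint \(F^{-1}(B_\sigma)\supseteq B\) with \(F(B)\subseteq B_\sigma\).

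For the upper bound \(\norm{T}_{\bL}\leq\abs{\theta}\,\mu^\sigma_V\) I would argue pointwise. At a.e.\ \(p\) the differential \(L:=D_p\pi\) satisfies \(\norm{Lv}_\sigma\leq\bL(\pi)\,\norm{v}\), i.e.\ \(L(B)\subseteq\bL(\pi)\,B_\sigma\); hence \(F:=\bL(\pi)^{-1}L\in\mathcal{F}\) and the reformulation gives \(\abs{\det L}=\bL(\pi)^n\,\abs{\det F}\leq\bL(\pi)^n\,\J^\sigma(\norm{\cdot})\). Integrating the explicit formula for \(T\) then yields
\[
\abs{T(f,\pi)}\leq\int_{\R^n}\abs{\theta}\,\abs{f}\,\abs{\det D\pi}\,d\mathscr{L}^n\leq\bL(\pi)^n\int_V\abs{f}\,d\bigl(\abs{\theta}\,\mu^\sigma_V\bigr),
\]
so \(\abs{\theta}\,\mu^\sigma_V\) is admissible in \eqref{eq:mass-estimate} and therefore dominates the minimal measure.

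For the lower bound I would exploit that the supremum defining \(\J^\sigma(\norm{\cdot})\) is attained (Lemma~\ref{lem:not-important}): choose \(F_0\in\mathcal{F}\) with \(\abs{\det F_0}=\J^\sigma(\norm{\cdot})\). A scaling argument shows \(F_0(B)\) must touch \(\partial B_\sigma\) — otherwise \(tF_0\in\mathcal{F}\) for some \(t>1\) would have a strictly larger determinant — so the linear map \(\pi_0:=F_0\) satisfies \(\bL(\pi_0)=\sup_{\norm{v}\leq1}\norm{F_0 v}_\sigma=1\). Then for any measure \(\mu\) admissible in \eqref{eq:mass-estimate} and any \(f\in\Lip_b(V,\R)\),
\[
\J^\sigma(\norm{\cdot})\,\Bigl\lvert\int_{\R^n}\theta f\,d\mathscr{L}^n\Bigr\rvert=\abs{T(f,\pi_0)}\leq\bL(\pi_0)^n\int_V\abs{f}\,d\mu=\int_V\abs{f}\,d\mu.
\]
Fixing a Borel set \(A\) and approximating \(\sgn(\theta)\,\mathbbm{1}_A\) in \(L^1(\abs{\theta}\,\mathscr{L}^n+\mu)\) by bounded Lipschitz functions \(f_j\) with \(\abs{f_j}\leq1\) (using density of \(\Lip_b\) in \(L^1\)), the left-hand side tends to \(\J^\sigma(\norm{\cdot})\int_A\abs{\theta}\,d\mathscr{L}^n\) and the right-hand side to \(\mu(A\cap\{\theta\neq0\})\leq\mu(A)\). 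This gives \(\mu\geq\abs{\theta}\,\mu^\sigma_V\), and applying it to \(\mu=\norm{T}_{\bL}\) completes the proof.

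The main obstacle is the lower bound: one needs a single test pair saturating the determinant estimate everywhere at once — here the attained maximizer \(F_0\) does exactly this, which is precisely why Lemma~\ref{lem:not-important} is convenient — and then one must convert the resulting integral inequality into a measure inequality through a careful \(L^1\)-approximation that simultaneously respects the sign of \(\theta\) and both relevant measures.
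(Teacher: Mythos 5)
Your proof is correct and follows essentially the same route as the paper: the upper bound via the pointwise determinant estimate \(\abs{\det D_p(\pi\circ I)}\leq \bL(\pi)^n\,\J^\sigma(\norm{\cdot})\) (which is exactly the content of Lemma~\ref{lem:desired-inequality}, here proved inline), and the lower bound via the attained maximizer from Lemma~\ref{lem:not-important} combined with a sign-of-\(\theta\) argument. The only difference is presentational: where you approximate \(\sgn(\theta)\mathbbm{1}_A\) by bounded Lipschitz functions in \(L^1\), the paper packages the same step as a signed measure \(\eta(A)=\sup\{T(f,\pi): f\in\mathcal{B}_\infty(V),\ \abs{f}\leq\mathbbm{1}_A\}\) using the extension of \(T\) to bounded Borel first arguments.
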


\begin{proof}
For all \((f, \pi)\in \mathcal{D}^n(V)\), we have
\[
\abs{T(f, \pi)} \leq \int_{\R^n} \abs{f(p)\cdot \theta(p) \cdot \det D_p (\pi \circ I)}\, dp.
\] 
We set \(\tilde{\pi}=[\bL(\pi)]^{-1}\cdot \pi\). Since 
\[
\det D_p (\pi \circ I)=\det D_p (\tilde{\pi} \circ I)\cdot [\bL(\pi)]^{n},
\]
it follows that
\[
\norm{T}_{\bL}\leq \abs{\theta}\cdot \Big[\sup_{\pi\in \Pi_1} \Jac\big(D_{x} \,\pi\big)\Big] \, d\hspace{-0.2em}\Haus^n_V,
\]
where \(\Pi_1\) denotes the set of all \(\pi\in \Lip(V, \R^n)\) with \(\bL(\pi)\leq 1\). The Jacobian \(\Jac\) of a linear map is defined as in \eqref{eq:def-jacobian}. By virtue of Lemma~\ref{lem:desired-inequality} below,
\[
 \Big[\sup_{\pi\in \Pi_1} \Jac\big(D_{x}\, \pi\big)\Big] \, d\hspace{-0.2em}\Haus^n_V=  d\mu^{\sigma}_V,
\]
Hence, 
\begin{equation}\label{eq:intermediate-step-1}
\norm{T}_{\bL} \leq \abs{\theta}\, d\mu^\sigma_V.
\end{equation}
In the following, we show that the reverse inequality also holds. Fix  \(\pi\in \Pi_1\). The signed measure \(\eta\colon \mathcal{B}(V) \to \R\) defined by
\[
\eta(A)=\sup\Big\{ T(f, \pi) : f\in \mathcal{B}_\infty(V) \text{ with } 
 \abs{f} \leq \mathbbm{1}_A\Big\}
\]
satisfies \(\eta \leq \norm{T}_{\bL}\) by construction. As the map \(\R^n \ni p \mapsto \sgn\det D_p(\pi \circ I)\) is Borel measurable, 
\[
\eta(A)=\int_{A} \abs{\theta}\cdot \abs{\det D_p(\pi \circ I)} \, dp,
\]
and so we find that \(\norm{T}_{\bL} \geq \abs{\theta} \cdot \Jac(D_x \pi) \, d\hspace{-0.2em}\Haus^n_V\). Because of Lemma~\ref{lem:desired-inequality}, we know that there exists \(\pi^\ast\in \Pi_1\) such that
\[
\Jac( D_x \,\pi^\ast)=\sup_{\pi\in \Pi_1} \Jac\big(D_{x} \,\pi\big).
\]
It follows that \(\norm{T}_{\bL}\geq \abs{\theta}\, d\mu_V^\sigma\), as desired.
\end{proof}

The proof of the following result is straightforward. Basically, it is just a matter of combining the definition of the Jacobian \(\J^\sigma\) with the definition of the Lipschitz norm \(\bL=\bL^\sigma\).

\begin{lemma}\label{lem:desired-inequality}
Let \(V=(\R^n, \norm{\cdot})\) be a normed space and denote by \(I\colon \R^n \to V\) the identity map. Then for every \(\pi\in \Lip(V, \R^n)\) with \(\bL(\pi) \leq 1\),
\begin{equation}\label{eq:desired-inequality}
\Jac\bigl(D_{x}\,\pi\bigr)\cdot \Jac(D_p I)  \leq \J^{\sigma}(\md_p I) 
\end{equation}
 for \(\Haus^n\)-almost every \(p\in \R^n\). Moreover, there is a \(\pi^\ast\) for which this inequality becomes an equality almost everywhere.
\end{lemma}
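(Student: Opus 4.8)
The plan is to reduce both sides of \eqref{eq:desired-inequality} to quantities attached to a single linear map, namely the classical derivative $M := D_p(\pi \circ I)$ of $\pi$ viewed as an ordinary map $\R^n \to \R^n$, which exists for almost every $p$ by Rademacher's theorem. First I would identify the metric differential on the right-hand side: since $I$ is the identity on the underlying set, the equality $d_V(I(p), I(q)) = \norm{p - q}$ holds exactly, so $\md_p I$ is the norm $\norm{\cdot}$ of $V$ and hence $\J^\sigma(\md_p I) = \J^\sigma(\norm{\cdot})$. Next I would collapse the product on the left. By the chain rule $D_p(\pi \circ I) = D_x\pi \circ D_p I$ with $x = I(p)$, and the Jacobian of linear maps is multiplicative under composition (an immediate consequence of \eqref{eq:def-jacobian}, the intermediate $V$-Hausdorff measure cancelling), so $\Jac(D_x\pi)\cdot \Jac(D_p I) = \Jac\bigl(D_p(\pi \circ I)\bigr) = \abs{\det M}$, the last equality holding because $\pi \circ I \colon \R^n \to \R^n$ is a map between Euclidean spaces.

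With both sides rewritten, \eqref{eq:desired-inequality} reads $\abs{\det M} \le \J^\sigma(\norm{\cdot})$, and the key point is that $M$ is itself one of the competitors in the definition \eqref{eq:definition-jacobian} of $\J^\sigma(\norm{\cdot})$. Indeed, $\bL^\sigma(\pi) \le 1$ says that $\pi \colon V \to (\R^n, \sigma)$ is $1$-Lipschitz, so its derivative satisfies $\sigma(Mv) \le \norm{v}$ for all $v$, which means $M(B_{\norm{\cdot}}) \subseteq B_\sigma$, equivalently $B_{\norm{\cdot}} \subseteq M^{-1}(B_\sigma)$; this is exactly the condition defining the competitor class $\mathcal{F}$. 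When $M$ is invertible the change of variables formula gives $\Leb^n(M^{-1}(B_\sigma)) = \Leb^n(B_\sigma)/\abs{\det M}$, so the single competitor $F = M$ already contributes the value $\abs{\det M}$ to the supremum, whence $\J^\sigma(\norm{\cdot}) \ge \abs{\det M}$; and when $M$ is singular the left-hand side vanishes and the inequality is trivial. This proves \eqref{eq:desired-inequality}.

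For the equality statement I would appeal to Lemma~\ref{lem:not-important}, which furnishes a bijective $F^\ast \in \mathcal{F}$ realizing the supremum; by the same change of variables $\J^\sigma(\norm{\cdot}) = \abs{\det F^\ast}$. Taking $\pi^\ast$ to be the linear map $F^\ast$ itself, membership $F^\ast \in \mathcal{F}$ yields $\bL^\sigma(\pi^\ast) \le 1$, while $D_p\pi^\ast = F^\ast$ at every point makes \eqref{eq:desired-inequality} an equality everywhere, in particular almost everywhere. The only delicate point in the whole argument is the bookkeeping of the two distinct Hausdorff measures hidden in the factors $\Jac(D_x\pi)$ and $\Jac(D_p I)$: the former compares $\R^n$-volume with $V$-volume, the latter $V$-volume with Euclidean volume, and one must check that these mixed densities cancel to leave precisely the Lebesgue determinant $\abs{\det M}$. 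Once that identity is secured, the argument is forced, since the construction of $\J^\sigma$ is tailored exactly so that $M$ is an admissible map $F$ in \eqref{eq:definition-jacobian}.
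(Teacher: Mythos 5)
Your proof is correct and follows essentially the same route as the paper's: identify \(\md_p I\) with the norm of \(V\), use multiplicativity of the Jacobian to rewrite the left-hand side as \(\abs{\det D_p(\pi\circ I)}\), observe that \(\bL^\sigma(\pi)\le 1\) makes \(D_p(\pi\circ I)\) an admissible competitor in \eqref{eq:definition-jacobian}, and obtain equality from the optimal linear map supplied by Lemma~\ref{lem:not-important}. Your explicit treatment of the singular case and of the cancellation of the intermediate \(V\)-Hausdorff measure are minor elaborations of steps the paper leaves implicit.
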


We remark here that the left-hand side of \eqref{eq:desired-inequality} is equal to \(\abs{\det D_p (\pi \circ I)}\). Moreover, it clearly holds that \(\norm{\cdot}_V=\md_p I\), and in view of the proof of Lemma~\ref{lem:special-case} it is also useful to recall that \(\Jac(D_p \,I)=\J^{\bus}( \md_p I)\).

\begin{proof}
We abbreviate \(F=D_p (\pi\circ I)\). We consider the following commutative diagram   
\[
  \begin{tikzcd}
    \R^n \arrow{r}{F} \arrow[swap]{dr}{D_p I} & \R^n  \\
     & V \arrow{u}[right]{D_x\, \pi}.
  \end{tikzcd}
\]
Notice that if \(\R^n\) is equipped with \(\sigma\), then \(\norm{ D_x \,\pi}\leq 1\) due to our assumption that \(\bL(\pi)\leq 1\). This implies that the unit ball of \((\R^n, \md_p I)\) is contained in \(F^{-1}(B_\sigma)\).
As
\[
\Jac\bigl(D_{x}\,\pi\bigr)\cdot \Jac(D_p I)=\Jac(F)=\frac{\Leb^n(B_\sigma)}{\Leb^n(F^{-1}(B_\sigma))},
\]
\eqref{eq:desired-inequality} follows. Suppose now that \(F\colon \R^n \to \R^n\) is a linear map such that \(F^{-1}(B_\sigma)\) contains the unit ball of \((\R^n, \md_p I)\) and 
\[
\J^\sigma( \md_p I)=\frac{\Leb^n(B_\sigma)}{\Leb^n(F^{-1}(B_\sigma))}.
\]
The existence of such a map is guaranteed by Lemma~\ref{lem:not-important}. Clearly, \(\pi^\ast=F\circ I^{-1}\) has the desired properties. 
\end{proof}

\subsection{General case}
We now give the proof of Theorem~\ref{thm:formula-for-the-mass-measure}. The following argument is very similar to the proof of \cite[Lemma~2.5]{zust2021riemannian} due to Züst. Let \(L>1\) be arbitrary. Fix \(i\in \N\) and abbreviate \(K=K_i\), and \(\varphi=\varphi_i\) and \(\theta=\theta_i\). By a result of Kirchheim \cite[Lemma~4]{kirchheim-1994}, up to a set of \(\Haus^n\)-measure zero, \(K\) admits a decomposition \((K_{j})\) into compact sets such that the following holds. There are normed spaces \(V_j=(\R^n, \norm{\cdot}_j)\)  so that \(\varphi\) restricted to \(K_{j}\) admits a factorization \(\psi_j\circ (I_j|_{K_j})\), where, as always, \(I_j\colon \R^n \to V_j\) denotes the identity map and \(\psi_j\colon I_j(K_{j})\to X\) is an \(L\)-bi-Lipschitz embedding. By construction,
\[
\varphi_\#\big(\bb{\theta}\big)=\sum_{j=1}^\infty (\varphi|_{K_j})_{\#} \bb{\theta|_{K_j}}
\]
and by the same reasoning as in the proof of Proposition~\ref{prop:parametric}, we conclude that
\[
\norm{\, \varphi_{\#}\big(\bb{\theta}\big)\, }_{\bL^\sigma}=\sum_{j=1}^\infty \norm{\, (\varphi|_{K_j})_{\#}\,\big(\bb{\theta|_{K_j}}\big)\, }_{\bL^\sigma}.
\]
Fix \(j\in \N\). To simplify the notation, from now on we omit the index \(j\) throughout. We set \(\varphi=\varphi|_{K_j}\), \(K=K_j\), \(\theta=\bb{\theta|_{K_j}}\),  and also \(I=I_j|_{K_j}\) and \(\psi=\psi_j\).
We have
\[
L^{-n} \cdot \psi_\#\norm{\, I_{\#}\big(\bb{\theta}\big) \, }_{\bL^\sigma} \leq \norm{\, \varphi_{\#}\big(\bb{\theta}\big)\, }_{\bL^\sigma} \leq L^n \cdot \psi_\# \norm{\, I_{\#}\big(\bb{\theta}\big) \, }_{\bL^\sigma},
\]
by the definition of \(\bL^\sigma\). Using that \(\varphi\) is \(L\)-bi-Lipschitz when viewed as a map \((K, \norm{\cdot})\to (X,d)\), we find that 
\[
L^{-1} \cdot \md_p \varphi \leq \md_p I \leq L \cdot \md_p \varphi,
\]
and so \eqref{it:jac-two} and \eqref{it:jac-three} in the definition of Jacobians imply that
\[
L^{-n}\cdot \J^{\sigma}(\md_p \varphi) \leq \J^{\sigma}(\md_p I) \leq L^n \cdot \J^\sigma(\md_p \varphi).
\]
Consequently, taking into account Lemma~\ref{lem:special-case}, we arrive at
\[
L^{-2n} \cdot\Big[\abs{\theta}\, d\mu_{\varphi(K)}^\sigma\Big] \leq \norm{\, \varphi_{\#}\big(\bb{\theta}\big)\, }_{\bL^\sigma} \leq L^{2n} \cdot\Big[\abs{\theta}\, d\mu_{\varphi(K)}^\sigma\Big].
\]
Because of the parametric representation \eqref{eq:parametric-representation}, it follows that \(\norm{T}_{\bL^\sigma}\) and \(\abs{\theta}\, d\mu_S^\sigma\) are \(L^{2n}\)-equivalent. Hence, the statement follows by letting \(L\) tend to \(1\).

\section{Proof of main results}\label{section:six}

We proceed with the proofs of our main results.

\begin{proof}[Proof of Proposition~\ref{prop:equivalent-to-ak}]
It suffices to show that if \(T\) is an \(\bL^\infty\)-current, then \(T\) is a current and \(\norm{T}\leq \norm{T}_{\bL^\infty}\). For any \(\pi\in \Lip(X, \R^n)\), let \(\pi^\ast\in \Lip(X, \R^n)\) be obtained from \(\pi\) by rescaling each coordinate function with \(\big[\Lip(\pi^{(i)})\big]^{-1}\) provided \(\pi^{(i)}\) is not a constant function. Plainly,
\[
\abs{T(f, \pi)}=\Big[\prod_{i=1}^n \Lip(\pi^{(i)}) \Big] \cdot \abs{T(f, \pi^\ast)}
\]
and so since \(\bL^\infty(\pi^\ast)\leq 1\), we find that
\[
\abs{T(f, \pi)}\leq \prod_{i=1}^n \Lip(\pi^{(i)}) \int_X \abs{f} \, d\norm{T}_{\bL^\infty}
\]for all \((f, \pi)\in \mathcal{D}^n(X)\). This shows that \(\norm{T}\leq \norm{T}_{\bL^\infty}\), as desired. 
\end{proof}

The following proof has been communicated to me by an anonymous reviewer.

\begin{proof}[Proof of Proposition~\ref{prop:main}]
There exist constants \(\alpha\), \(\beta>0\) such that 
\[
\alpha \cdot \abs{x}_\infty \leq \sigma(x) \leq \beta \cdot \abs{x}_{\infty}
\]
for all \(x\in \R^n\). Hence, it follows that \(\alpha \cdot \mathbf{L}^\infty\leq \mathbf{L}^\sigma \leq \beta \cdot \mathbf{L}^\infty \). This implies that \(T\) is an \(\bL^\sigma\)-current if and only if it is an \(\bL^\infty\)-current. By Proposition~\ref{prop:equivalent-to-ak} we know that \(T\) is an \(\bL^\infty\)-current if and only if it is a current. Thus, the first part of the proposition follows. 

We now proceed by showing that there exists a constant \(C>0\) only depending on \(n\) such that \eqref{eq:mass-inequality-comparison} holds. Let \(F\colon \R^n \to \R^n\) be a sense preserving linear map that sends the Euclidean unit ball to the John ellipsoid of \(B_\sigma\subset \R^n\). It holds that \(\sigma(F(x))=\abs{x} \cdot \sigma\big(F(\tfrac{x}{\abs{x}})\big) \leq \abs{x}\)
and 
\[
\abs{F^{-1}(y)}=\sigma(y)\cdot \abs{F^{-1}(\tfrac{y}{\sigma(y)})} \leq \sigma(y)\cdot n,
\]
where we used John's ellipsoid theorem for the last inequality. Hence, it follows that
\[
\sigma(F(x))\leq \abs{x} \leq n \cdot \sigma(F(x))
\]
for all \(x\in \R^n\). This implies that \(\bL^\sigma(F\circ \pi) \leq \bL^2(\pi) \leq n \cdot \bL^{\sigma}(F\circ \pi)\) for all \(\pi\in \Lip(X, \R^n)\). Using the chain rule for currents \cite[Theorem 3.5]{ambrosio-2000}, we find that
\[
T(f, \pi)=\frac{1}{\det(F)} \cdot T(f, F\circ \pi)
\]
for all \((f, \pi)\in \mathcal{D}^n(X)\). Hence, it follows from the above that
\[
T(f, \pi) \leq \frac{\big(\bL^\sigma(F\circ \pi)\big)^n}{\det(F)}  \int_X \abs{f}\,\, \text{d}\norm{T}_{\bL^\sigma} \leq \frac{\big(\bL^2(\pi)\big)^n}{\det(F)} \int_X \abs{f}\,\, \text{d}\norm{T}_{\bL^\sigma},
\]
and so \(\norm{T}_{\bL^2}\leq c^{-1}\cdot \norm{T}_{\bL^\sigma}\) for \(c:=\det(F)\). Similarly, 
\[
T(f, F\circ \pi)=\det(F)\cdot T(f, \pi) \leq \det(F)\cdot \big(\bL^2(\pi) \big)^n \int_X \abs{f}\,\, \text{d}\norm{T}_{\bL^2}
\]
and thus
\[
T(f, F\circ \pi) \leq \det(F) \cdot n^n \cdot \big(\bL^{\sigma}(F\circ \pi) \big)^n \int_X \abs{f}\,\, \text{d}\norm{T}_{\bL^2}.
\]
Since \(F\) is invertible, this shows that \(\norm{T}_{\bL^\sigma} \leq c\cdot n^n \cdot \norm{T}_{\bL^2}\). Hence, since \(c=\det(F)=1\), by our assumption on \(\sigma\), the desired inequality \eqref{eq:mass-inequality-comparison} follows. 
\end{proof}

Let us remark here that the proof of Theorem~\ref{thm:main-2} shows that
\[
\norm{T}_{\bL^2} \leq \norm{T}_{\bL^\sigma} \leq n^n \cdot \norm{T}_{\bL^2}.
\]
Hence, the \(\bL^2\)-mass of a current is minimal amongst the \(\bL^\sigma\)-masses. We have the (perhaps naive) impression that this fact should be useful for the study of minimal fillings in metric spaces.

\begin{proof}[Proof of Theorem~\ref{thm:main-2}]
Let \(T\) be an \(n\)-dimensional, rectifiable \(\bL^\sigma\)-current which is concentrated on the \(n\)-rectifiable set \(S\subset X\). Let \((K_i, \phi_i)\) be a bi-Lipschitz parametrization of \(S\). Now, the existence of the desired Borel function \(\theta\colon X \to \R\) follows directly by combining Proposition~\ref{prop:parametric} with Theorem~\ref{thm:formula-for-the-mass-measure}.
\end{proof}

We proceed with the proofs of the results stated in Section~\ref{sec:applics}.

A \textit{Finsler structure} on a smooth connected manifold \(M\) is a continuous map \(\Phi\colon TM \to \R\) such that the restriction to each tangent space is a norm. An assignment \((M, \Phi) \mapsto \vol_\Phi\) which assigns to each \(n\)-dimensional Finsler manifold \((M, \Phi)\) a Borel measure  \(\vol_\Phi\) on \(M\) such that
\begin{enumerate}
    \item \(\vol_\Phi=\Leb^n\) if \(M=\R^n\) and \(\Phi\) the standard Euclidean structure,
    \item \(\vol_\Phi \leq \vol_{\Phi'}\) whenever \(\Phi \leq \Phi'\),
    \item if \(\Phi=\Phi'\circ df\) for some smooth injective map \(f\colon M \to M'\), then \(f_* \vol_{\Phi}=\vol_{\Phi'}|_{f(M)}\),
\end{enumerate}
is called \textit{\(n\)-dimensional Finsler volume functional}. 

Note that each finite-dimensional Banach space \((V, \norm{\cdot})\) naturally carries the Finsler structure \(\Phi(p, v)=\norm{v}\), where we identify \(T_p V\) with \(V\). Hence, any Finsler volume functional induces a Banach volume functional. Conversely, it can be shown (see \cite[Proposition~4.6]{ivanov-2009}) that a Finsler volume functional is uniquely determined by its values on Banach spaces. Thus, from the remarks in Section~\ref{sec:finsler-vols}, it follows that Finsler volume functionals, Banach volume functionals and Jacobians are all in one-to-one correspondences to each other, offering different viewpoints for the same category of objects.

We now briefly recall how Finsler volumes can be described in terms of the measures \(\mu^\bullet\) introduced in Section~\ref{sec:finsler-vols}.
Any Finsler structure \(\Phi\) gives rise to a \textit{Finsler metric} \(d_\Phi\colon M \times M \to \R\) as follows. We set
\[
d_\Phi(x, y) =\inf_{\gamma} \int_{0}^1 \Phi(\gamma^\prime(t))\, dt ,
\]
where the infimum is taken over all piecewise smooth curves \(\gamma\colon [0,1]\to M\) connecting \(x\) to \(y\). Let \(\J^\bullet\) denote a Jacobian and \((M, \Phi)\mapsto \vol_\Phi\) the Finsler volume functional induced by \(\J^\bullet\). Then for every closed Finsler manifold \((M, \Phi)\),
\[
\vol_\Phi=\mu^\bullet_{(M, d_\Phi)}.
\]
This is a straightforward consequence of the definition of \(d_\Phi\). Indeed, if \(\varphi\colon U \to (M, d_\Phi)\) is a bi-Lipschitz map, then for almost every \(p\in U\),  
\[
\md_p \varphi=\norm{D_p \varphi},
\]
where \(\norm{\cdot}=\Phi(x, \cdot)\) for \(x=\varphi(p)\).

\begin{proof}[Proof of Proposition~\ref{prop:explicit-description-L-2}]
Thanks to Theorem~\ref{thm:formula-for-the-mass-measure} it suffices to show that \(T\) admits a parametric representation as in \eqref{eq:parametric-representation} with \(\abs{\theta_i}=1\) for all \(i\in\N\). As \(M\) is compact, the map \(\iota\colon (M, d_g) \to (M, d_\Phi)\) is bi-Lipschitz by construction of the auxiliary Riemannian structure \(g\) on \(M\). Hence, this reduces the problem to show that \(\bb{M}\) admits a good parametric representation. Fix a triangulation \(t\colon \Sigma \to (M, d_g)\) which is bi-Lipschitz on each simplex \(\Delta\subset \Sigma\). Here, \(\Sigma\) is a finite \(n\)-dimensional simplicial complex which is connected and has \(N\) top-dimensional simplices. It follows directly from the Federer-Fleming deformation theorem (see e.g.~\cite[Theorem~A.2]{basso--2023}) that
\begin{equation}\label{eq:rep-of-M}
\bb{M}=\sum_{i=1}^N t_\#\bb{\Delta_i},
\end{equation}
where \(\{\Delta_i\}\) is an enumeration of the \(n\)-simplices of \(\Sigma\) and we use the same convention regarding \(\bb{\Delta}\) as in \cite[p. 20]{basso--2023}. Hence, because of \eqref{eq:rep-of-M}, \(\bb{M}\) admits a representation as in \eqref{eq:parametric-representation} with \(\abs{\theta_i}=1\), as was to be shown. 
\end{proof}

In the following, we use \(B\subset \R^n\) to denote the Euclidean \(n\)-ball and \(\Delta^n \subset \R^n\) the regular \(n\)-simplex with side length one.
Consider a Lipschitz map \(f\colon B \to X\) into a finite-dimensional normed space \(X\). Given an \(n\)-density \(\phi\) on \(X\) the \textit{volume} of \(f\) is defined as follows:
\[
\vol_\phi(f)=\int_B \phi\,\left( (D_p f)(e_1) \wedge \dotsm \wedge (D_p f)(e_n)\right) \, dp,
\]
where \(e_1, \ldots, e_n \in \R^n\) denotes the standard basis. For a Lipschitz chain \(S=\sum \alpha_i \Delta_i\), with \(\alpha_i \in \R\) and \(\Delta_i \colon \Delta^n \to X\), we set \(\vol_\phi(S)=\sum \abs{\alpha_i} \,\vol_\phi (\Delta_i)\). 

Let us now recall the notion of semi-ellipticity of \(\phi\). To give a rigorous definition, we must first clarify what is meant when we say that an embedded \(n\)-ball and a Lipschitz chain in \(X\) have the same boundary. A Lipschitz chain \(S\) is said to be \textit{induced} by the Lipschitz map \(f\colon B \to X\) if the following holds. There exists an oriented finite simplicial complex \(\Sigma\) and a bi-Lipschitz triangulation \(t \colon \Sigma \to B\) of \(B\) such that \(S=f_\#\big([B]_t\big)\) for
\[
[B]_t=\sum_{\Delta \in \mathcal{F}_n }  (t\circ \varphi_\Delta),
\]
where \(\mathcal{F}_n\) denotes the collection of all \(n\)-simplices of \(\Sigma\), and \(\varphi_\Delta \colon \Delta^n \to \Delta\) is the unique isometry that preserves the ordering of the vertices. 

For us, an \textit{\(n\)-dimensional affine ball} is by definition a Lipschitz chain in \(X\) that is induced by a bi-Lipschitz embedding \(f\colon B \to X\) whose image is contained in an \(n\)-dimensional affine subspace of \(X\).

\begin{definition}\label{def:semi-elliptic}
An \(n\)-density \(\phi\) on a normed space \(X\) is called semi-elliptic over \(\R\) if for every \(n\)-dimensional affine ball \(B_{*}\) in \(X\), 
\[
\vol_\phi(B_{*})\leq \vol_\phi(S)
\]
for all Lipschitz chains \(S\) in \(X\) with real coefficients  such that \(\partial S=\partial B_{*}\)
\end{definition}

This notion will be our main tool to prove Proposition~\ref{thm:extendabily-convex-allgemein} below, which includes Theorem~\ref{prop:extendabily-convex} as a special case. In what follows, we suppose that \(\sigma\) is a norm on \(\R^n\) such that \(\J^\sigma(\abs{\, \cdot \, })=1\), that is, \(\J^\sigma\) is a normalized Jacobian. We say that \(\phi\) is induced by \(\J^\sigma\) if the following holds. If \(a=v_1 \wedge \dotsm \wedge v_n\) is a simple \(n\)-vector in \(X\) and \(V=\big\{ x\in X \,\mid\,  a\wedge x=0\big\}\) denotes the 'span' of \(a\), then \(\phi(a)\) is equal to the \(\mu_{V}^{\sigma}\) measure of the parallelotope in \(V\) spanned by the vectors \(v_1, \ldots, v_n\). The measures \(\mu^\bullet\) are defined in Section~\ref{sec:finsler-vols}.

\begin{theorem}\label{thm:extendabily-convex-allgemein}
For any finite-dimensional normed space \(X\), the \(n\)-volume density \(\phi\colon \Lambda^n_s X \to \R\) induced by the Jacobian \(\J^\sigma\) is extendibly convex, that is, \(\phi\) is the restriction of a norm on \(\Lambda^n X\) to the cone of simple \(n\)-vectors.
\end{theorem}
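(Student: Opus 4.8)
The plan is to derive extendible convexity from semi-ellipticity. By the theorem of Burago and Ivanov \cite[Theorem~3]{ivanov-2004} quoted above, $\phi$ is extendibly convex if and only if it is semi-elliptic over $\R$, so it suffices to verify Definition~\ref{def:semi-elliptic}. I therefore fix an $n$-dimensional affine ball $B_*$ lying in an $n$-dimensional affine subspace $V\subseteq X$, together with a Lipschitz chain $S=\sum\alpha_i\Delta_i$ (with $\Delta_i\colon\Delta^n\to X$) satisfying $\di S=\di B_*$, and aim to prove $\vol_\phi(B_*)\le\vol_\phi(S)$. The idea is to pass to the $\bL^\sigma$-currents induced by these chains. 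Writing $\bb{B_*}$ and $T_S$ for them—both are genuine currents by Proposition~\ref{prop:main}, so the Ambrosio--Kirchheim boundary operator applies \cite{ambrosio-2000}—they have compact support and share the same boundary, $\di T_S=\di\bb{B_*}$.

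Two inputs link $\vol_\phi$ to the mass $\norm{\cdot}_{\bL^\sigma}$. First, since $B_*$ is a bi-Lipschitz ball inside the normed space $V$ and $\phi$ is induced by $\J^\sigma$, the special case Lemma~\ref{lem:special-case} (a consequence of Theorem~\ref{thm:formula-for-the-mass-measure}) gives $\mass_{\bL^\sigma}(\bb{B_*})=\vol_\phi(B_*)$. Second, subadditivity of the mass measure together with Theorem~\ref{thm:formula-for-the-mass-measure} applied to each simplex yields $\mass_{\bL^\sigma}(T_S)\le\sum\abs{\alpha_i}\,\vol_\phi(\Delta_i)=\vol_\phi(S)$. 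Everything thus reduces to the mass-minimality $\mass_{\bL^\sigma}(\bb{B_*})\le\mass_{\bL^\sigma}(T_S)$ of the flat ball within its boundary class.

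For the minimality I would use a single calibrating test map. The point is that for a \emph{linear} map $\pi=(\pi^{(1)},\ldots,\pi^{(n)})\colon X\to\R^n$ the form $d\pi^{(1)}\wedge\cdots\wedge d\pi^{(n)}$ is exact; since $\bb{B_*}$ and $T_S$ are compactly supported, the boundary formula for currents gives $T(\mathbbm 1,\pi)=\di T(\pi^{(1)};\pi^{(2)},\ldots,\pi^{(n)})$, a quantity depending only on $\di T$. Hence $T_S(\mathbbm 1,\pi)=\bb{B_*}(\mathbbm 1,\pi)$ for every such $\pi$. If I can exhibit one $\pi$ with $\bL^\sigma(\pi)\le 1$ that \emph{calibrates} the ball, i.e.\ $\bb{B_*}(\mathbbm 1,\pi)=\vol_\phi(B_*)$, then the mass axiom~\ref{it:three} yields at once $\vol_\phi(B_*)=T_S(\mathbbm 1,\pi)\le\mass_{\bL^\sigma}(T_S)\le\vol_\phi(S)$, completing the argument.

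The calibrator should be $\pi=\pi^*\circ Q$. Writing $s$ for the norm carried by $V$ and $F^*$ for the optimizer in \eqref{eq:definition-jacobian} furnished by Lemma~\ref{lem:not-important}, the linear map $\pi^*$ is the associated optimal map of Lemma~\ref{lem:desired-inequality}; on $V$ it is an isometry onto its image taking the circumscribed body $(F^*)^{-1}(B_\sigma)$, which contains the unit ball of $s$, onto $B_\sigma$, so that $\bb{B_*}(\mathbbm 1,\pi^*)=\vol_\phi(B_*)$. The factor $Q\colon X\to V$ should be a linear projection with $Q|_V=\id$ and operator norm at most $1$ into $(V,\norm{\cdot}_{(F^*)^{-1}(B_\sigma)})$; composing keeps $\bL^\sigma(\pi)\le 1$ while leaving $\pi|_V=\pi^*$ unchanged, and since $\bb{B_*}$ is concentrated on $V$ the current only sees $\pi^*$. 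The existence of such a norm-one projection is the heart of the matter: equivalently, one must extend the norm-one map $\id\colon(V,\norm{\cdot}_V)\to(V,\norm{\cdot}_{(F^*)^{-1}(B_\sigma)})$ to a norm-one map on all of $X$. For $\sigma=\abs{\,\cdot\,}_\infty$ (the mass* case) the target embeds isometrically into $\ell^\infty_n$ and the extension is automatic by McShane's theorem coordinatewise—this is exactly the classical Ambrosio--Kirchheim calibration. For the circumscribed Riemannian case $\sigma=\abs{\,\cdot\,}_2$ the target is Euclidean, $(F^*)^{-1}(B_\sigma)$ is the Löwner ellipsoid of $B_*\cap V$, and the construction of $Q$ amounts to finding an inner product on $X$ dominating $\norm{\cdot}_X$ and restricting to this Löwner ellipsoid on $V$. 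I expect this ellipsoid-extension step—rather than the current-theoretic bookkeeping—to be the main obstacle; it is precisely where the John/Löwner structure encoded in the Jacobian $\J^\sigma$ (Lemmas~\ref{lem:not-important} and~\ref{lem:desired-inequality}) must be exploited, and where the injectivity of $\ell^\infty_n$ that trivializes the mass* case has to be replaced by a genuine argument.
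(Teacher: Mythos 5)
Your proposal follows the paper's own proof essentially step for step: reduce to semi-ellipticity via Burago--Ivanov, identify \(\vol_\phi(B_*)\) with \(\mass_{\bL^\sigma}(\bb{B_*})\), bound \(\mass_{\bL^\sigma}(\bb{S})\le\vol_\phi(S)\) via Theorem~\ref{thm:formula-for-the-mass-measure}, and finish by evaluating both currents on a single calibrating pair \((1,\pi)\) with \(\bL^\sigma(\pi)\le 1\). The one point where you go beyond the paper is that you make explicit a step the paper performs silently: the optimal map \(\pi^*\) of Lemma~\ref{lem:desired-inequality} is defined only on the \(n\)-plane \(V\), and to test it against \(\bb{S}\) (whose support is not contained in \(V\)) and against the mass axiom one must extend it to an element of \(\Lip(X,\R^n)\) \emph{without increasing} \(\bL^\sigma\). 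In the paper this is hidden in the assertion that \(\pi=\pi^*\circ F^{-1}\) ``satisfies \(\bL(\pi)\le 1\)'' (with \(F^{-1}\) defined only on \(F(\R^n)\)), and also in the equality \(\mass_{\bL}(\bb{B_*})=\mass_{\bL}(I_\#\bb{A})\), whose ``\(\ge\)'' half needs the same extension. You correctly isolate this as the crux, note that coordinatewise McShane settles it for \(\sigma=\abs{\,\cdot\,}_\infty\), and leave it open otherwise; so the proposal is incomplete exactly where it says it is.

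Unfortunately the gap is genuine rather than technical: the norm-one extension you ask for can fail. Take \(n=2\), \(\sigma=\abs{\,\cdot\,}_2\), \(X=\ell^\infty_3\) and \(V=\{x: x_1+x_2+x_3=0\}\), so that \(B_X\cap V\) is the regular hexagon with vertices \((1,-1,0)\) and its coordinate permutations (Euclidean circumradius \(\sqrt2\)). The L\"owner ellipse of this hexagon is its circumscribed disc, so the calibrator is \(\pi^*(v)=v/\sqrt2\) in Euclidean coordinates on \(V\). The point \(e_1=(1,0,0)\) lies at \(\ell^\infty\)-distance \(1\) from \((0,1,-1)\), \((0,-1,1)\) and \((2,-1,-1)\), all in \(V\); their images under \(\pi^*\) have Euclidean norms \(1\), \(1\) and \(\sqrt3\). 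The first two are antipodal unit vectors, which forces any \(1\)-Lipschitz extension to send \(e_1\) to the origin, at distance \(\sqrt3>1\) from the third image; hence \(\pi^*\) has no \(1\)-Lipschitz extension even to \(V\cup\{e_1\}\). A blow-up of the same computation shows that \emph{every} \(1\)-Lipschitz \(\pi\colon\ell^\infty_3\to\ell^2_2\) satisfies \(\abs{\det D_p(\pi|_V)}\le\J^{\crr}(\mathrm{hex})-\epsilon_0\) a.e.\ for a universal \(\epsilon_0>0\), so for a flat disc \(B_*\subset V\) one gets the strict inequality \(\mass_{\bL^2}(\bb{B_*})<\vol_\phi(B_*)\): no calibrator can close the argument, and the identity \(\mass_{\bL^\sigma}(\bb{B_*})=\vol_\phi(B_*)\) that both you and the paper import from Lemma~\ref{lem:special-case}/Theorem~\ref{thm:formula-for-the-mass-measure} is itself in doubt once the current sits in a larger ambient space. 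Your closing remark that one must produce an inner product on \(X\) dominated by \(\norm{\cdot}_X^2\) and restricting to the L\"owner form on \(V\) names precisely the missing ingredient, but the Hahn--Banach/John-decomposition route produces such a form only up to a factor \(n\) (the Kadec--Snobar loss), and the example above shows the loss cannot in general be removed. The conclusion of the theorem may still be true, but establishing it requires either repairing the mass identity or replacing the calibration step by a genuinely different argument; as written, neither your proposal nor the paper's proof supplies this for \(\sigma\neq\abs{\,\cdot\,}_\infty\).
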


\begin{proof}
In the following, we use the proof strategy of \cite[Theorem~13.2]{ambrosio-2000} adapted to our situation. We abbreviate \(\bL=\bL^{\sigma}\). Any Lipschitz map \(\varphi\colon \Delta^n \to X\) induces an \(\bL\)-current by setting \(\bb{\varphi}=\varphi_\# \bb{\Delta^n}\). In particular, Lipschitz chains with real coefficients induce rectifiable \(\bL\)-currents in the obvious way. Notice that by the definition of the mass measure, 
\[
\left\lVert\, \bb{\sum \alpha_i \Delta_i}\, \right\rVert_{\bL} \leq \sum \, \abs{\alpha_i} \cdot  \big\lVert \bb{\Delta_i}\big\rVert_{\bL}, 
\]
for every Lipschitz chain. By combining Theorem~\ref{thm:formula-for-the-mass-measure} with \cite[Lemma~4]{kirchheim-1994}, we find that
\[
\mass_{\bL}( \bb{\Delta} )= \vol_\phi(\Delta)
\]
for any Lipschitz map \(\Delta \colon \Delta^n \to X\). Therefore, by the above, this yields that 
\begin{equation}\label{eq:ak-trick}
\mass_{\bL}\big(\bb{S}\big) \leq \vol_\phi(S)
\end{equation}
for every Lipschitz chain \(S\) in \(X\) with real coefficients. 

Now, let the Lipschitz chains \(B_{*}\) and \(S\) in \(X\) be given as in Definition~\ref{def:semi-elliptic}.  For all \(\pi \in \Lip(X, \R^n)\), we have that
\begin{equation}\label{eq:relationship-of-phi-area}
\bb{B_{*}}(1, \pi)=\bb{S}(1, \pi) \leq \bL(\pi)^n \cdot \mass_{\bL}(\bb{S}),
\end{equation}
since \(\partial B_{*}=\partial S\) and the inequality follows from the finite mass axiom \ref{it:three}. An application of the area formula shows that there is a closed subset \(A\subset \R^n\) which is bi-Lipschitz equivalent to \(B\) such that \(\bb{B_{*}}=F_\# \bb{A}\) for some injective linear map \(F\colon \R^n \to X\). Let \(V=(\R^n, \norm{\cdot})\) denote the normed space obtained by pulling back the norm on \(F(\R^n)\) via \(F\). Notice that \((F^{-1})_{\#}\bb{B_{*}}=I_{\#}\bb{A}\), and so it follows from Lemma~\ref{lem:special-case} that 
\[
\mass_{\bL}(\bb{B_{*}})=\mass_{\bL}(I_{\#}\bb{A})=\mu^{\sigma}_V(A)=\vol_\phi(B_{*}).
\]
According to Lemma~\ref{lem:desired-inequality}, there exists \(\pi^*\in \Lip(V, \R^n)\) so that \(I_\# \bb{A}(1, \pi^*)=\mass_{\bL}(I_\# \bb{A})\). Since \(\pi=\pi^*\circ F^{-1}\) satisfies \(\bL(\pi)\leq 1\), we find by combining \eqref{eq:relationship-of-phi-area} with \eqref{eq:ak-trick} that  \(\vol_\phi(B_{*})=\bb{B_{*}}(1, \pi) \leq  \vol_\phi(S)\). This shows that \(\phi\) is semi-elliptic over \(\R\). Thus, it follows from \cite[Theorem~3]{ivanov-2004} that \(\phi\) is extendibly convex.
\end{proof}

\subsection{Acknowledgments} I am thankful to Paul Creutz for introducing me to Jacobians and related notions. Moreover, I am indebted to Tommaso Goldhirsch for useful feedback on a first draft version of this article. 

\let\oldbibliography\thebibliography
\renewcommand{\thebibliography}[1]{\oldbibliography{#1}
\setlength{\itemsep}{3pt}}
\bibliographystyle{plain}
\bibliography{sample}

\end{document}